\def\A{\mathcal{A}}
\def\C{\mathcal{C}}
\def\E{\mathcal{E}}
\def\K{{\mathcal{K}}}
\def\M{\mathcal{M}}
\def\N{\mathbb{N}}
\def\P{\mathcal{P}}
\def\R{\mathbb{R}}
\def\U{\mathcal{U}}
\def\V{{\mathcal{V}}}
\theoremstyle{plain}
\newtheorem{thm}{Theorem}[section]
\newtheorem{cor}{Corollary}[section]
\newtheorem{lem}{Lemma}[section]
\newtheorem{prop}{Proposition}[section]
\theoremstyle{definition}
\newtheorem{assmp}{Assumption}[section]
\newtheorem{dfn}{Definition}[section]
\newtheorem{rem}{Remark}[section]
\numberwithin{equation}{section}
\title{Relaxation and Purification for Nonconvex Variational Problems in Dual Banach Spaces: The Minimization Principle in Saturated Measure Spaces\thanks{An earlier version of this paper was presented at the ``10th IFAC Symposium on Nonlinear Control Systems'', Monterey and at the ``International Workshop on Nonlinear Analysis and Convex Analysis'', Research Institute for Mathematical Sciences, Kyoto University, respectively held in August 2016. I am grateful to Vladimir Kadets for his valuable remark, which led to the improvement of the Lyapunov convexity theorem presented in this paper, M. Ali Khan for candid discussions on the overall subject, and Sehie Park, Wataru Takahashi, Hong-Kun Xu, two anonymous referees, and the editor of the journal for their helpful comments. This research is supported by JSPS KAKENHI Grant No.\ 26380246 from the Ministry of Education, Culture, Sports, Science and Technology, Japan.}}
\date{\today}
\author{Nobusumi Sagara 
\\{\small Department of Economics, Hosei University} \\
{\small 4342, Aihara, Machida, Tokyo 194--0298, Japan} \\
{\small e-mail: nsagara@hosei.ac.jp}}
\begin{document}
\maketitle
\setcounter{page}{0}
\thispagestyle{empty}
\clearpage

\begin{abstract} 
We formulate bang-bang, purification, and minimization principles in dual Banach spaces with Gelfand integrals and provide a complete characterization of the saturation property of finite measure spaces. We also present an application of the relaxation technique to large economies with infinite-\hspace{0pt}dimensional commodity spaces, where the space of agents is modeled as a finite measure space. We propose a ``relaxation'' of large economies, which is regarded as a reasonable convexification of original economies. Under the saturation hypothesis, the relaxation and purification techniques enable us to prove the existence of Pareto optimal allocations without convexity assumptions. \\

\noindent
\textbf{Key Words:} Saturated measure space; Lyapunov convexity theorem; Gelfand integral; Bang-bang principle; Purification principle; Minimization principle; Relaxed control; Large economy.   \\[-6pt]

\noindent \textbf{MSC2010:} Primary: 28B05, 28B20, 46G10; Secondary: 49J30, 91B50
\end{abstract}

\section{Introduction}
Recent advances in measure theory have stimulated the resurgence of Maharam's classical work \cite{ma42} to the formulation of the ``saturation'' of measure spaces. The notion of saturation is rooted in the study of Loeb spaces. A refinement of this notion appeared in \cite{fk02,fr12,hk84} and the current definition is formulated in \cite{ks09}. Saturated measure spaces possess an essentially uncountably generated $\sigma$-\hspace{0pt}algebra and saturation is a strengthened notion of nonatomicity. As a consequence, saturation remedies a well-known failure of the Lyapunov convexity theorem in infinite-dimensional Banach spaces and guarantees its validity, even in nonseparable locally convex spaces; see \cite{gp13,ks13,ks15,ks16a}. Earlier remedies for the failure of the Lyapunov convexity theorem in infinite dimensions were given in \cite{su92,su97} for the case with Banach spaces with the Radon--Nikodym property in the setting of nonatomic Loeb spaces. The recovery of the Lyapunov convexity theorem in infinite dimensions is undoubtedly useful,  especially in variational analysis. 

It is noteworthy that saturation is not only sufficient, but also necessary for the Lyapunov convexity theorem in separable Banach spaces, as shown in \cite{ks13,ks15,ks16a}. Furthermore, it is also necessary and sufficient for the convexity of the Bochner and the Gelfand integrals of a multifunction taking values in separable Banach spaces and their dual spaces (see \cite{po08,sy08}), and for Fatou's lemma in the Bochner and the Gelfand integral settings; see \cite{ks14a,kss16}. In this sense, saturation is the best possible structure on measure spaces for dealing with vector measures taking values in infinite-dimensional vector spaces. Based on these findings, in \cite{ks14b} the interplay of the Lyapunov convexity theorem and the bang-bang principle in separable Banach spaces were established with the Bochner integral setting and the relaxation and the purification techniques for nonconvex variational problems were given a full-fledged treatment under the saturation hypothesis. See also \cite{ls06,ls09,po09} for earlier results on the purification principle. 

This work details a further step toward the equivalence results on saturation along the lines of the aforementioned literature. The purpose of the paper is twofold. First, we formulate the bang-bang and purification principles in dual spaces of a separable Banach space with Gelfand integrals and provide a complete characterization of the saturation property of finite measure spaces. To this end, we make the best use of ``relaxed controls'' developed in \cite{mc67,wa72,yo69}. In particular, we provide the equivalence of saturation and the existence of solutions to nonconvex variational problems with Gelfand integrals constraints. This is a novel aspect not pursued in the author's previous work \cite{ks14b}, which is referred to the ``minimization principle'' for saturation. 

Second, we present an application of the relaxation technique to large economies with infinite-dimensional commodity spaces, where the space of agents is modeled as a finite measure space following \cite{au64,au66}. We propose a ``relaxation'' of large economies along the lines of \cite{ba08,ks16b}, which is regarded as a reasonable convexification of original economies. We introduce the notion of relaxed Pareto optimality and derive the existence of Pareto optimal allocations of the original economy under the saturation hypothesis. The relaxation and purification techniques enable us to prove the existence of Pareto optimal allocations without convexity assumptions.   

In the following section, we provide a brief overview of Gelfand integrals for functions and multifunctions taking values in dual spaces of a Banach space and derive the compactness property of the set of Gelfand integrable selectors of a multifunction. Thereafter, the organization of the paper proceeds to address the two purposes described.

\section{Preliminaries}
\subsection{Gelfand Integrals of Functions}
Let $(T,\Sigma,\mu)$ be a finite measure space and $E$ be a real Banach space with the dual system $\langle E,E^* \rangle$, where $E^*$ is the norm dual of $E$. A function $f:T\to E^*$ is \textit{weakly$^*\!$ scalarly measurable} if the scalar function $\langle f(\cdot),x \rangle$ on $T$ is measurable for every $x\in E$. We say that weakly$^*\!$ scalarly measurable functions $f$ and $g$ are \textit{weakly$^*\!$ scalarly equivalent} if $\langle f(t)-g(t),x \rangle=0$ for every $x\in E$ a.e.\ $t\in T$ (the exceptional $\mu$-\hspace{0pt}null set depending on $x$). Denote by $\mathrm{Borel}(E^*,\mathit{w}^*)$ the Borel $\sigma$-\hspace{0pt}algebra of $E^*$ generated by the weak$^*\!$ topology. If $E$ is a separable Banach space, then $E^*$ is separable with respect to the weak$^*\!$ topology (see \cite[Lemma I.3.4 of Part II]{sc73}) and it is a locally convex Suslin space under the weak$^*\!$ topology; see \cite[p.\,67]{th75}. Hence, under the separability of $E$, a function $f:T\to E^*$ is weakly$^*\!$ scalarly measurable if and only if it is measurable with respect to $\mathrm{Borel}(E^*,\mathit{w}^*)$; see \cite[Theorem 1]{th75}.

We say that a weakly$^*\!$ scalarly measurable function $f:T\to E^*$ is \textit{weakly$^*\!$ scalarly integrable} if the scalar function $\langle f(\cdot),x \rangle$ is integrable for every $x\in E$. A weakly$^*\!$ scalarly measurable function $f$ is \textit{Gelfand integrable} over $A\in \Sigma$ if there exists $x^*_A\in E^*$ such that 
$$
\langle x^*_A,x \rangle=\int_A\langle f(t),x \rangle d\mu \quad\text{for every $x\in E$}.
$$ 
The element $x^*_A$ is called the \textit{Gelfand integral} (or the \textit{weak$^*\!$ integral}) of $f$ over $A$ and denoted by $\int_Afd\mu$. Every weakly$^*\!$ scalarly integrable function is weakly$^*\!$ integrable; see \cite[Theorem 11.52]{ab06}. Denote by $G^1(\mu,E^*)$ (abbreviated to $G^1_{E^*}$) the equivalence classes of Gelfand integrable functions with respect to weakly$^*\!$ scalarly equivalence, normed by
$$
\| f \|_{\mathit{G}^1}=\sup_{x\in B_E}\int_T|\langle f(t),x \rangle |d\mu,
$$
where $B_E$ is the closed unit ball in $E$. This norm is called the \textit{Gelfand norm}, whereas the normed space $(G^1(\mu,E^*), \|\cdot \|_{\mathit{G}^1})$, in general, is not complete.

\subsection{The Topology of Pointwise Convergence on $G^1_{E^*}$}
Let $L^\infty(\mu)$ be the space of $\mu$-essentially bounded measurable functions on $T$ with the essential sup norm. Denote by $B(L^\infty(\mu)\times E)$ the space of bilinear forms on the product space $L^\infty(\mu)\times E$. For each pair $(\varphi,x)\in L^\infty(\mu)\times E$, the linear functional $\varphi\otimes x$ on $B(L^\infty(\mu)\times E)$ is defined by $(\varphi\otimes x)(M)=M(\varphi,x)$, $M\in B(L^\infty(\mu)\times E)$. The \textit{tensor product} $L^\infty(\mu)\otimes E$ of $L^\infty(\mu)$ and $E$ is the subspace of the algebraic dual of $B(L^\infty(\mu)\times E)$ spanned by these elements $\varphi\otimes x$. Thus, a typical \textit{tensor} $f^*$ in $L^\infty(\mu)\otimes E$ has a (not necessarily unique) representation $f^*=\sum_{i=1}^n\varphi_i\otimes x_i$ with $\varphi_i\in L^\infty(\mu)$, $x_i\in E$, $i=1,\dots,n$. A bilinear form on $G^1(\mu,E^*)\times (L^\infty(\mu)\otimes E)$ is given by
$$
\langle f^*,f \rangle=\sum_{i=1}^n\int_T\varphi_i(t)\langle f(t),x_i \rangle d\mu=\sum_{i=1}^n\left\langle \int_T\varphi_i(t)f(t)d\mu,x_i \right\rangle
$$
for $f\in G^1(\mu,E^*)$ and $f^*=\sum_{i=1}^n\varphi_i\otimes x_i\in L^\infty(\mu)\otimes E$, where $\int\varphi_ifd\mu$ denotes the Gelfand integral of $\varphi_if\in G^1(\mu,E^*)$. The pair of these spaces $\langle G^1(\mu,E^*),L^\infty(\mu)\otimes E \rangle$ equipped with this bilinear form is a dual system. 

The coarsest topology on $G^1(\mu,E^*)$ such that the linear functional $f\mapsto \langle f^*,f \rangle$ is continuous for every $f^*\in L^\infty(\mu)\otimes E$, denoted by $\sigma(G^1_{E^*},L^\infty\otimes E)$, is the \textit{topology of pointwise convergence} on $L^\infty(\mu)\otimes E$, generated by the family of seminorms $\{ p_{f^*}\mid f^*\in L^\infty\otimes E \}$, where $p_{f^*}(f)=|\langle f^*,f \rangle|$, $f\in G^1(\mu,E^*)$. Thus, $G^1(\mu,E^*)$ endowed with the $\sigma(G^1_{E^*},L^\infty\otimes E)$ is a locally convex space. Let $L^1(\mu)$ be the space of $\mu$-integrable functions with the $L^1$-norm. A net $\{ f_\alpha \}$ in $G^1(\mu,E^*)$ converges to $f\in G^1(\mu,E^*)$ for the $\sigma(G^1_{E^*},L^\infty\otimes E)$-\hspace{0pt}topology if and only if for every $x\in E$ the net $\{ \langle f_\alpha(\cdot),x \rangle \}$ in $L^1(\mu)$ converges weakly to $\langle f(\cdot),x \rangle \in L^1(\mu)$. It is evident that the $\sigma(G^1_{E^*},L^\infty\otimes E)$-\hspace{0pt}topology is coarser than the weak topology $\sigma(G^1_{E^*},(G^1_{E^*})^*)$.

\subsection{Gelfand Integrals of Multifunctions}
\label{subsec}
Let $\Gamma:T\twoheadrightarrow E^*$ be a multifunction. (By \textit{multifunction} we always mean a set-valued mapping with nonempty values.) Denote by $\overline{\mathrm{co}}^{\mathit{\,w}^*}\Gamma:T\twoheadrightarrow E^*$ the multifunction defined by the weakly$^*\!$ closed convex hull of $\Gamma(t)$ and by $\mathrm{ex}\,\overline{\mathrm{co}}^{\,\mathit{w}^*}\Gamma:T\twoheadrightarrow E^*$ the multifunction defined by the extreme points of $\overline{\mathrm{co}}^{\mathit{\,w}^*}\Gamma(t)$. Let $s:(\cdot, C):E\to \R\cup \{+\infty \}$ be the \textit{support function} of a set $C\subset E^*$ defined by $s(x, C)=\sup_{x^*\in C}\langle x^*,x \rangle$. A multifunction $\Gamma$ is \textit{weakly$^*\!$ scalarly measurable} if the scalar function $s(x,\Gamma):T\to \R\cup\{ +\infty \}$ is measurable for every $x\in E$; it is \textit{weakly$^*\!$ scalarly integrable} if $s(x, \Gamma)$ is integrable for every $x\in E$. It follows from $s(x,\Gamma)=s(x,\overline{\mathrm{co}}^{\mathit{\,w}^*}\Gamma)=s(x,\mathrm{ex}\,\overline{\mathrm{co}}^{\,\mathit{w}^*}\Gamma)$ for every $x\in E$ that $\Gamma$ is weakly$^*\!$ scalarly measurable (resp.\ weakly$^*\!$ scalarly integrable) if and only if so is $\overline{\mathrm{co}}^{\mathit{\,w}^*}\Gamma$ (resp.\ $\mathrm{ex}\,\overline{\mathrm{co}}^{\,\mathit{w}^*}\Gamma$). The \textit{graph} of $\Gamma$ is defined by the set $\mathrm{gph}\,\Gamma=\{ (t,x^*)\in T\times E^*\mid x^*\in \Gamma(t) \}$. A multifunction $\Gamma$ is \textit{integrably bounded} if there exists $\varphi\in L^1(\mu)$ such that $\sup_{x^*\in \Gamma(t)}\| x^* \|\le \varphi(t)$ for every $t\in T$. If $\Gamma$ has weakly$^*\!$ compact convex values, then $\Gamma$ is weakly$^*\!$ scalarly measurable if and only if $\mathrm{gph}\,\Gamma\in \Sigma\otimes \mathrm{Borel}(E^*,\mathit{w}^*)$ whenever $(T,\Sigma,\mu)$ is complete and $E$ is separable; see \cite[Theorem III.30]{cv77}. 

A function $f:T\to E^*$ is a \textit{selector} of a multifunction $\Gamma$ if $f(t)\in \Gamma(t)$ a.e.\ $t\in T$. Denote by $\mathcal{S}^1_\Gamma$ the set of Gelfand integrable selectors of $\Gamma$. If $E$ is separable, then $E^*$ is Suslin with respect to the weak$^*\!$ topology, and hence, any multifunction $\Gamma$ with $\mathrm{gph}\,\Gamma\in \Sigma\otimes \mathrm{Borel}(E^*,\mathit{w}^*)$ admits a measurable selector whenever $(T,\Sigma,\mu)$ is complete; see \cite[Theorem III.22]{cv77}. Since any measurable selector from an integrably bounded multifunction $\Gamma$ is Gelfand integrable, $\mathcal{S}^1_\Gamma$ is nonempty for every integrably bounded multifunction $\Gamma$ with measurable graph whenever $E$ is separable. The Gelfand integral of $\Gamma$ is defined by
$$
\int_T\Gamma(t)d\mu=\left\{ \int_Tf(t)d\mu \mid f\in \mathcal{S}^1_\Gamma \right\}. 
$$

For the later use, we need the following result. 

\begin{lem}
\label{lem1}
Let $(T,\Sigma,\mu)$ be a complete finite measure space, $E$ be a separable Banach space, and $\Gamma:T\twoheadrightarrow E^*$ be an integrably bounded, weakly$^*\!$ closed convex-\hspace{0pt}valued multifunction with $\mathrm{gph}\,\Gamma\in \Sigma\otimes \mathrm{Borel}(E^*,\mathit{w}^*)$. Then $\mathcal{S}^{1}_\Gamma$ is nonempty, $\sigma(G^1_{E^*},L^\infty\otimes E)$-\hspace{0pt}compact, and convex. 
\end{lem}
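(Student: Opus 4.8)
The plan is to dispatch nonemptiness and convexity quickly and to devote the work to $\sigma(G^1_{E^*},L^\infty\otimes E)$-compactness, which I would obtain by identifying $\mathcal S^1_\Gamma$ with a closed subset of a compact product of weakly compact order intervals of $L^1(\mu)$. Nonemptiness is in fact already contained in the preliminary discussion: completeness of $(T,\Sigma,\mu)$, separability of $E$ (so that $(E^*,w^*)$ is Suslin), and $\mathrm{gph}\,\Gamma\in\Sigma\otimes\mathrm{Borel}(E^*,w^*)$ produce a measurable selector by \cite[Theorem III.22]{cv77}, which is Gelfand integrable because $\Gamma$ is integrably bounded. Convexity is immediate: for $f,g\in\mathcal S^1_\Gamma$ and $\lambda\in[0,1]$ the function $\lambda f+(1-\lambda)g$ is weakly$^*\!$ scalarly measurable, is dominated in norm by the integrable bound $\varphi$ of $\Gamma$ hence Gelfand integrable, and lies in $\Gamma(t)$ a.e.\ by convexity of the values.

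For compactness, fix a countable $\mathbb Q$-linear subspace $D=\{x_n\}_{n\in\N}$ that is dense in $E$. Each value $\Gamma(t)\subseteq\varphi(t)B_{E^*}$ is weakly$^*\!$ compact, so $\Gamma$ is weakly$^*\!$ scalarly measurable by \cite[Theorem III.30]{cv77}, and $|s(x,\Gamma(t))|\le\|x\|\varphi(t)$ shows each $s(x_n,\Gamma(\cdot))$ lies in $L^1(\mu)$. Define $\Phi:\mathcal S^1_\Gamma\to\prod_{n\in\N}L^1(\mu)$ by $\Phi(f)=(\langle f(\cdot),x_n\rangle)_n$. The $n$-th coordinate of $\Phi(f)$ always lies in the order interval $I_n=\{g\in L^1(\mu):|g|\le\|x_n\|\varphi\}$, which is bounded and uniformly integrable, hence relatively weakly compact by the Dunford--Pettis theorem, and is convex and norm-closed, hence weakly compact; so $K=\prod_n I_n$ is compact for the product of the weak topologies by Tychonoff's theorem, and $\Phi(\mathcal S^1_\Gamma)\subseteq K$. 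Since $\|f(t)\|\le\varphi(t)$ a.e.\ for every $f\in\mathcal S^1_\Gamma$ and $D$ is dense, a routine approximation argument shows that $\Phi$ is a homeomorphism of $(\mathcal S^1_\Gamma,\sigma(G^1_{E^*},L^\infty\otimes E))$ onto $\Phi(\mathcal S^1_\Gamma)$ carrying the topology inherited from $K$ (in particular $\Phi$ is injective modulo weak$^*\!$ scalar equivalence). Hence it suffices to show that $\Phi(\mathcal S^1_\Gamma)$ is closed in $K$.

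So let $(g_n)_n\in K$ lie in the closure of $\Phi(\mathcal S^1_\Gamma)$, realized by a net $f_\alpha\in\mathcal S^1_\Gamma$ with $\langle f_\alpha(\cdot),x_n\rangle\to g_n$ weakly in $L^1(\mu)$ for every $n$. Since each set $\{g\in L^1(\mu):g\le s(x_n,\Gamma(\cdot))\ \text{a.e.}\}$ is convex and norm-closed, hence weakly closed, and since the linear operations are weak-to-weak continuous, the limit $(g_n)$ inherits all constraints valid along the net: for a.e.\ $t$ one has $g_n(t)\le s(x_n,\Gamma(t))$ and $|g_n(t)|\le\|x_n\|\varphi(t)$ for all $n$, and $x_n\mapsto g_n(t)$ is $\mathbb Q$-linear on $D$. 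Off a single $\mu$-null set $N$ (the union of the countably many exceptional sets) this $\mathbb Q$-linear functional on $D$ is bounded by $\varphi(t)\|\cdot\|$, so it extends uniquely to some $f(t)\in E^*$ with $\|f(t)\|\le\varphi(t)$; by density and the $\varphi(t)\|\cdot\|$-Lipschitz dependence of $s(\cdot,\Gamma(t))$, the inequality $\langle f(t),x\rangle\le s(x,\Gamma(t))$ then holds for all $x\in E$, whence $f(t)\in\Gamma(t)$ by Hahn--Banach separation of the point $f(t)$ from the weakly$^*\!$ closed convex set $\Gamma(t)$ in the locally convex space $(E^*,w^*)$, whose topological dual is $E$. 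Defining $f$ on $N$ to agree with a fixed member of $\mathcal S^1_\Gamma$, $f$ is a selector of $\Gamma$; $\langle f(\cdot),x_n\rangle$ equals $g_n$ a.e.\ and so is measurable, hence $\langle f(\cdot),x\rangle$ is measurable for every $x\in E$ by density and pointwise approximation, i.e.\ $f$ is weakly$^*\!$ scalarly measurable; and $|\langle f(t),x\rangle|\le\varphi(t)\|x\|$ makes $f$ weakly$^*\!$ scalarly integrable, hence Gelfand integrable by \cite[Theorem 11.52]{ab06}. Thus $f\in\mathcal S^1_\Gamma$ with $\Phi(f)=(g_n)_n$, so $\Phi(\mathcal S^1_\Gamma)$ is closed in $K$, therefore compact, and therefore $\mathcal S^1_\Gamma$ is $\sigma(G^1_{E^*},L^\infty\otimes E)$-compact.

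The step I expect to be the main obstacle is the reconstruction in the last paragraph: extracting a genuine Gelfand integrable selector $f$ from the limiting coordinate functions $(g_n)$ and, above all, verifying $f(t)\in\Gamma(t)$ a.e. This is exactly where passing to a countable dense $\mathbb Q$-subspace is essential, so that all ``a.e.'' qualifiers involve only countably many null sets whose union is still null; it is where the weak closedness in $L^1(\mu)$ of the order intervals $I_n$ and of the sets $\{g\le s(x_n,\Gamma(\cdot))\}$ is used; and it is where Hahn--Banach separation in $(E^*,w^*)$ enters, relying on $\Gamma(t)$ being weakly$^*\!$ closed and convex. The remaining ingredients — nonemptiness, convexity, the Dunford--Pettis/Tychonoff compactness of $K$, and the homeomorphism property of $\Phi$ — are comparatively routine.
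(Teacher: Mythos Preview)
Your proof is correct, but it takes a different route from the paper's. The paper embeds $\mathcal S^1_\Gamma$ into the full product $L^1(\mu)^E$ via $f\mapsto v_f$, $v_f(x)=\langle f(\cdot),x\rangle$, and then simply \emph{cites} Cascales--Kadets--Rodr\'iguez \cite{ckr11} (their Proposition~2.3, Theorem~4.5, and Condition~($\alpha$)) to conclude that the image $\mathcal V=\{v_f:f\in\mathcal S^1_\Gamma\}$ is $\tau_p$-compact; compactness of $\mathcal S^1_\Gamma$ then follows from the continuity of the inverse map $\Psi v_f=f$. By contrast, you replace the external citation with a self-contained argument: you pass to a countable dense $\mathbb Q$-linear subspace $D\subset E$, embed into a \emph{countable} product $\prod_n I_n$ of Dunford--Pettis weakly compact order intervals, invoke Tychonoff, and then do the real work of reconstructing the limiting selector $f$ from the coordinate limits $(g_n)$ via support-function inequalities and Hahn--Banach separation in $(E^*,w^*)$. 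What you gain is transparency and independence from \cite{ckr11}; what the paper gains is brevity, since the reconstruction step you flag as the ``main obstacle'' is exactly what is absorbed into the cited reference. Your use of countably many coordinates (rather than all of $E$) is what makes the a.e.\ bookkeeping in that reconstruction tractable, and is the genuinely additional idea your argument needs beyond the paper's one-line appeal to \cite{ckr11}.
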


\begin{proof}
The nonemptiness and convexity of $\mathcal{S}^1_\Gamma$ are obvious. Let $L^1(\mu)^E$ be the space of functions from $E$ to $L^1(\mu)$ endowed with the product topology $\tau_p$ induced by the weak topology of $L^1(\mu)$. This means that the convergence of a net $\{ v_\alpha \}$ in $L^1(\mu)^E$ is characterized by the pointwise convergence: $v_\alpha\to v$ in the $\tau_p$-\hspace{0pt}topology if and only if $v_\alpha(x) \to v(x)$ weakly in $L^1(\mu)$ for every $x\in E$. For each $f\in G^1(\mu,E^*)$ define $v_f\in L^1(\mu)^E$ by $v_f(x)=\langle f(\cdot),x \rangle$. Then the convex set $\V=\{ v_f\mid f\in \mathcal{S}^1_\Gamma \}$ is $\tau_p$-\hspace{0pt}compact in $L^1(\mu)^E$; see \cite[Proposition 2.3, Theorem 4.5, and Condition $(\alpha)$ of p.\,885]{ckr11}. Define $\Psi:\V\to G^1(\mu,E^*)$ by $\Psi v_f=f$. (Since $f\mapsto v_f$ is one-to-one, $\Psi$ is well-defined.) To demonstrate the continuity of $\Psi$, let $v_{f_\alpha}\to v_f$ in $\V$ for the $\tau_p$-\hspace{0pt}topology. Since $\langle f_\alpha(\cdot),x \rangle=v_{f_\alpha}(x)\to v_f(x)=\langle f(\cdot),x \rangle$ weakly in $L^1(\mu)$ for every $x\in E$, we have $\int \varphi(t)\langle (\Psi v_{f_\alpha})(t),x \rangle d\mu\to \int \varphi(t)\langle (\Psi v_f)(t),x \rangle d\mu$ for every $(\varphi,x)\in L^\infty(\mu)\times E$. Thus, $\Psi$ is continuous in the $\tau_p$-\hspace{0pt}topology in $L^1(\mu)^E$ and  the $\sigma(G^1_{E^*},L^\infty\otimes E)$-\hspace{0pt}topology in $G^1(\mu,E^*)$. Hence, $\Psi(\V)=\mathcal{S}^1_\Gamma$ is $\sigma(G^1_{E^*},L^\infty\otimes E)$-\hspace{0pt}compact. 
\end{proof}

\section{Relaxation and Purification in Saturated Measure Spaces}
\subsection{The Bang-Bang Principle}
In the sequel, we always assume the completeness of $(T,\Sigma,\mu)$. A finite measure space $(T,\Sigma,\mu)$ is said to be \textit{essentially countably generated} if its $\sigma$-\hspace{0pt}algebra can be generated by a countable number of subsets together with the null sets; $(T,\Sigma,\mu)$ is said to be \textit{essentially uncountably generated} whenever it is not essentially countably generated. Let $\Sigma_S=\{ A\cap S\mid A\in \Sigma \}$ be the $\sigma$-\hspace{0pt}algebra restricted to $S\in \Sigma$. Denote by $L^1_S(\mu)$ the space of $\mu$-integrable functions on the measurable space $(S,\Sigma_S)$ whose elements are restrictions of functions in $L^1(\mu)$ to $S$. An equivalence relation $\sim$ on $\Sigma$ is given by $A\sim B \Leftrightarrow \mu(A\triangle B)=0$, where $A\triangle B$ is the symmetric difference of $A$ and $B$ in $\Sigma$. The collection of equivalence classes is denoted by $\Sigma(\mu)=\Sigma/\sim$ and its generic element $\widehat{A}$ is the equivalence class of $A\in \Sigma$. We define the metric $\rho$ on $\Sigma(\mu)$ by $\rho(\widehat{A},\widehat{B})=\mu(A\triangle B)$. Then $(\Sigma(\mu),\rho)$ is a complete metric space (see \cite[Lemma 13.13]{ab06}) and $(\Sigma(\mu),\rho)$ is separable if and only if $L^1(\mu)$ is separable (see \cite[Lemma 13.14]{ab06}). The \textit{density} of $(\Sigma(\mu),\rho)$ is the smallest cardinal number of the form $|\U|$, where $\U$ is a dense subset of $\Sigma(\mu)$. 

\begin{dfn}
A finite measure space $(T,\Sigma,\mu)$ is \textit{saturated} if $L^1_S(\mu)$ is nonseparable for every $S\in \Sigma$ with $\mu(S)>0$. We say that a finite measure space has the \textit{saturation property} if it is saturated.
\end{dfn}

Saturation implies nonatomicity and several equivalent definitions for saturation are known; see \cite{fk02,fr12,hk84,ks09}. One of the simple characterizations of the saturation property is as follows. A finite measure space $(T,\Sigma,\mu)$ is saturated if and only if $(S,\Sigma_S,\mu)$ is essentially uncountably generated for every $S\in \Sigma$ with $\mu(S)>0$. The saturation of finite measure spaces is also synonymous with the uncountability of the density of $\Sigma_S(\mu)$ for every $S\in \Sigma$ with $\mu(S)>0$; see \cite[331Y(e)]{fr12}. An inceptive notion of saturation already appeared in \cite{ka44,ma42}. The significance of the saturation property lies in the fact that it is necessary and sufficient for the weak$^*\!$ compactness and the convexity of the Gelfand integral of a multifunction as well as the Lyapunov convexity theorem; see \cite{ks13,ks15,ks16a,po08,sy08}. We present here the relevant result from \cite[Theorem 3.3 and 3.6]{ks15} with a slight extension for later use. 

\begin{prop}[Lyapunov convexity theorem]
\label{lyap1}
Let $(T,\Sigma,\mu)$ be a finite measure space and $E$ be a sequentially complete, separable, locally convex space. If $(T,\Sigma,\mu)$ is saturated, then for every $\mu$-continuous vector measure $m:\Sigma\to E$, its range $m(\Sigma)$ is weakly compact and convex. Conversely, if every $\mu$-continuous vector measure $m:\Sigma\to E$ has the weakly compact convex range, then $(T,\Sigma,\mu)$ is saturated whenever $E$ is an infinite-\hspace{0pt}dimensional locally convex space such that there is an infinite-dimensional Banach space $X$ that admits an injective continuous linear operator from $X$ into $E$. 
\end{prop}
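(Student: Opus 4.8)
The statement is two implications, and the plan is to derive each from the corresponding result in \cite{ks15}, supplying the extra generality needed here. \emph{Direct implication.} Let $m:\Sigma\to E$ be $\mu$-continuous and countably additive. Since $E$ is sequentially complete, the range $m(\Sigma)$ is bounded and relatively weakly compact, by the Bartle--Dunford--Schwartz theorem in the form valid for sequentially complete locally convex spaces; so it remains to show $m(\Sigma)$ is convex and weakly closed, and this is where saturation is used. Separability of $E$ gives a countable set $\{x_n^*\}\subset E^*$ separating points, so $m$ is determined by the scalar $\mu$-continuous measures $\langle m(\cdot),x_n^*\rangle$; write $h_n\in L^1(\mu)$ for their Radon--Nikodym densities and let $\mathcal{G}\subset\Sigma$ be the separable $\sigma$-subalgebra they generate. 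Fix $A\in\Sigma$ with $\mu(A)>0$; since $(A,\Sigma_A,\mu)$ is again saturated, the Maharam-type structure of saturated spaces produces, for every $\lambda\in[0,1]$, a set $B_\lambda\in\Sigma_A$ with $\mu(B_\lambda\cap G)=\lambda\,\mu(A\cap G)$ for all $G\in\mathcal{G}$; approximating each $\mathcal{G}$-measurable $h_n$ by $\mathcal{G}$-simple functions yields $\langle m(B_\lambda),x_n^*\rangle=\int_{B_\lambda}h_n\,d\mu=\lambda\int_A h_n\,d\mu=\lambda\langle m(A),x_n^*\rangle$ for all $n$, hence $m(B_\lambda)=\lambda\,m(A)$. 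A routine exhaustion argument then upgrades this ``fractioning property'' to convexity and weak closedness of $m(\Sigma)$, exactly as in \cite[Theorem 3.3]{ks15}, which settles this implication.

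\emph{Converse.} Suppose $(T,\Sigma,\mu)$ is not saturated; I would exhibit a $\mu$-continuous $E$-valued measure with non-convex range. The hypothesis first forces nonatomicity: if $A_0$ is an atom, pick $v\neq0$ in $E$ and let $m(A)=v$ when $\mu(A\cap A_0)=\mu(A_0)$ and $m(A)=0$ otherwise; this is $\mu$-continuous and countably additive with range $\{0,v\}$, which is not convex. Non-saturation now provides $S\in\Sigma$ with $\mu(S)>0$ for which $L^1_S(\mu)$ is separable, and by nonatomicity $(S,\Sigma_S,\mu)$ is a nonatomic, essentially countably generated measure space. Replacing $X$ by an infinite-dimensional separable closed subspace if necessary, the necessity part of the Lyapunov theorem for separable Banach spaces \cite[Theorem 3.6]{ks15} (see also \cite{ks13,ks16a}), applied to $X$ on $(S,\Sigma_S,\mu)$, gives a $\mu$-continuous vector measure $m_X:\Sigma_S\to X$ whose range $C=m_X(\Sigma_S)$ is not convex. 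With $\iota:X\to E$ the given injective continuous linear operator, define $m:\Sigma\to E$ by $m(A)=\iota\bigl(m_X(A\cap S)\bigr)$; this is a $\mu$-continuous countably additive measure with $m(\Sigma)=\iota(C)$. Were $\iota(C)$ convex, then for $a,b\in C$ and $t\in[0,1]$ we would have $\iota\bigl(ta+(1-t)b\bigr)=t\iota(a)+(1-t)\iota(b)\in\iota(C)$, and injectivity of $\iota$ would give $ta+(1-t)b\in C$, contradicting non-convexity of $C$. Hence $m(\Sigma)$ is not convex, so the hypothesis of the converse fails.

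The genuinely delicate point is the convexity step of the direct implication: it is the sole place where the full force of saturation enters, through the ``relative Lyapunov'' behaviour of Maharam-homogeneous components of uncountable type, and it is exactly what fails over the Lebesgue algebra. Since it is already carried out in \cite[Theorem 3.3]{ks15}, the only genuinely new work is the bookkeeping above --- extracting relative weak compactness from sequential completeness and transporting the Banach-space counterexample across $\iota$, the latter relying on the injectivity of $\iota$ to preserve non-convexity.
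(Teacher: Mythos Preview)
Your proof is correct and follows essentially the same route as the paper: the direct implication is delegated to \cite[Theorem~3.3]{ks15}, and the converse transports a Banach-space counterexample into $E$ through the injective operator, using injectivity to preserve the failure of convexity. The paper is terser---it cites \cite[Lemma~4.1 and Theorem~4.2]{ks13} to obtain the bad $X$-valued measure on all of $\Sigma$ without splitting off the atomic case or restricting to $S$, and leaves the ``by construction'' step implicit---but your more explicit bookkeeping is sound and amounts to the same argument.
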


\begin{proof}
We show only the converse implication. If $(T,\Sigma,\mu)$ is not saturated, then for every infinite-dimensional Banach space $X$ there is a $\mu$-continuous vector measure $n:\Sigma\to X$ such that its range $n(\Sigma)$ is not weakly compact, convex subset of $X$; see \cite[Lemma 4.1 and Theorem 4.2]{ks13}. Let $\Phi:X\to E$ be an injective continuous linear operator and define the $\mu$-continuous vector measure $m:\Sigma\to E$ by $m=\Phi\circ n$. By construction, $m$ does not possess the weakly compact convex range in $E$. 
\end{proof}

\begin{cor}
\label{lyap2}
Let $(T,\Sigma,\mu)$ be a finite measure space and $E$ be a separable Banach space. If $(T,\Sigma,\mu)$ is saturated, then for every $\mu$-continuous vector measure $m:\Sigma\to E^*$, its range $m(\Sigma)$ is weakly$^*\!$ compact and convex. Conversely, if every $\mu$-continuous vector measure $m:\Sigma\to E^*$ has the weakly$^*\!$ compact convex range, then $(T,\Sigma,\mu)$ is saturated whenever $E$ is infinite dimensional. 
\end{cor}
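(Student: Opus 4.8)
The plan is to obtain both implications directly from Proposition~\ref{lyap1}, taking the sequentially complete, separable, locally convex space there to be $E^*$ equipped with the weak$^*$ topology. Three structural facts make this legitimate: (i) since $E$ is separable, $(E^*,w^*)$ is a separable locally convex space, as already recalled in the preliminaries; (ii) $(E^*,w^*)$ is sequentially complete, because a weak$^*$ Cauchy sequence in $E^*$ is pointwise Cauchy on $E$, hence pointwise convergent to a linear functional on $E$ that is bounded by the uniform boundedness principle and therefore belongs to $E^*$; and (iii) the topological dual of $(E^*,w^*)$ is the canonical image of $E$, so the weak topology $\sigma\bigl((E^*,w^*),(E^*,w^*)^*\bigr)$ on $E^*$ coincides with the weak$^*$ topology $\sigma(E^*,E)$. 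Consequently, a vector measure $m:\Sigma\to E^*$ is $\mu$-continuous into the locally convex space $(E^*,w^*)$ in the sense of Proposition~\ref{lyap1} exactly when it is $\mu$-continuous in the sense of this corollary, and ``weakly compact and convex'' in $(E^*,w^*)$ means precisely ``weakly$^*$ compact and convex''.

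For the first implication I would simply invoke the direct part of Proposition~\ref{lyap1} applied to the space $(E^*,w^*)$: saturation of $(T,\Sigma,\mu)$ forces the range $m(\Sigma)$ of every $\mu$-continuous vector measure $m:\Sigma\to E^*$ to be weakly compact and convex in $(E^*,w^*)$, which by (iii) is exactly the asserted weak$^*$ compactness and convexity.

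For the converse I would apply the converse part of Proposition~\ref{lyap1} with the locally convex space $(E^*,w^*)$ and the auxiliary infinite-dimensional Banach space $X=(E^*,\|\cdot\|)$: the identity map $(E^*,\|\cdot\|)\to(E^*,w^*)$ is an injective continuous linear operator since the norm topology is finer than the weak$^*$ topology, $E^*$ is infinite dimensional because $E$ is, and $(E^*,w^*)$ is likewise an infinite-dimensional locally convex space. Hence the hypothesis that every $\mu$-continuous vector measure into $E^*$ has weakly$^*$ compact convex range is precisely the hypothesis required to conclude from Proposition~\ref{lyap1} that $(T,\Sigma,\mu)$ is saturated.

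The only points demanding care are the verifications in (ii) and (iii) — the weak$^*$ sequential completeness of the dual of a Banach space and the identification of its continuous dual with the predual — but both are classical duality facts, and the whole substance of the corollary is carried by Proposition~\ref{lyap1}, so I do not anticipate a genuine obstacle.
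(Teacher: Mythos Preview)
Your proposal is correct and follows essentially the same route as the paper: apply Proposition~\ref{lyap1} to the locally convex space $(E^*,w^*)$, using weak$^*$ sequential completeness for the direct part and the identity map $(E^*,\|\cdot\|)\to(E^*,w^*)$ as the required injective continuous linear operator for the converse. Your treatment is in fact slightly more explicit than the paper's, since you spell out why the weak topology on $(E^*,w^*)$ coincides with the weak$^*$ topology (point~(iii)), a detail the paper leaves tacit.
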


\begin{proof}
Since $E^*$ is sequentially complete with respect to the weak$^*\!$ topology consistent with duality (see \cite[Corollary 2.6.21]{me98}), the range $m(\Sigma)$ is weakly$^*\!$ compact and convex by Proposition \ref{lyap1}. For the converse implication, let $\imath_{E^*}: (E^*,\| \cdot \|)\to (E^*,\mathit{w}^*)$ be the identity map on $E^*$, which is an injective continuous linear operator. Therefore, $E^*$ satisfies the hypothesis in Proposition \ref{lyap1}. 
\end{proof}

The following result is the \textit{bang-bang principle} in infinite dimensions, an analogue of \cite[Theorem 4.1]{ks14b} in the dual space setting. 

\begin{thm}[bang-bang principle]
\label{BBP}
Let $(T,\Sigma,\mu)$ be a saturated finite measure space, $E$ be a separable Banach space, and $\Gamma:T\twoheadrightarrow E^*$ be an integrably bounded, weakly$^*\!$ closed-valued multifunction with $\mathrm{gph}\,\Gamma\in \Sigma\otimes \mathrm{Borel}(E^*,\textit{w}^*)$. Then 
\begin{equation}
\label{bbp}
\int_T\Gamma(t)d\mu=\int_T\mathrm{ex}\,\overline{\mathrm{co}}^{\,\mathit{w}^*}\Gamma(t)d\mu. \tag{BBP}
\end{equation}
\end{thm}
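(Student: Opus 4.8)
The plan is to establish the two inclusions of \eqref{bbp} separately. The inclusion $\int_T \mathrm{ex}\,\overline{\mathrm{co}}^{\,\mathit{w}^*}\Gamma(t)d\mu \subseteq \int_T \Gamma(t)d\mu$ requires care, because in general the extreme points of $\overline{\mathrm{co}}^{\,\mathit{w}^*}\Gamma(t)$ need not lie in $\Gamma(t)$ itself; the correct statement is that they lie in the weak$^*$ closure of $\Gamma(t)$. So the first step is to reduce to the case where $\Gamma$ already has weakly$^*$ closed \emph{convex} values. One observes that $s(x,\Gamma)=s(x,\overline{\mathrm{co}}^{\,\mathit{w}^*}\Gamma)$ for every $x\in E$, whence $\overline{\mathrm{co}}^{\,\mathit{w}^*}\Gamma$ is again weakly$^*$ scalarly measurable and, by Carathéodory-type selection and the measurability of the graph (via \cite[Theorem III.30]{cv77}), has a product-measurable graph; it is still integrably bounded by the same $\varphi$. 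Thus it suffices to prove $\int_T \Gamma(t)d\mu = \int_T \mathrm{ex}\,\Gamma(t)d\mu$ when $\Gamma$ itself is weakly$^*$ closed convex valued, since $\mathcal{S}^1_\Gamma$ changes the integral only through its ``convexification,'' and $\int_T\Gamma\,d\mu=\int_T\overline{\mathrm{co}}^{\,\mathit{w}^*}\Gamma\,d\mu$ will follow from the convexity half of the present theorem (or directly from Corollary \ref{lyap2}).

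Granting that reduction, fix $x^*_0=\int_T f_0\,d\mu$ with $f_0\in\mathcal{S}^1_\Gamma$; the goal is to produce a selector $g$ of $\mathrm{ex}\,\Gamma$ with $\int_T g\,d\mu = x^*_0$. The key device is Lemma \ref{lem1}: the set $\mathcal{S}^1_\Gamma$ is convex and $\sigma(G^1_{E^*},L^\infty\otimes E)$-compact. Consider the (nonempty, by $f_0$) face
\[
\mathcal{F}=\left\{ f\in\mathcal{S}^1_\Gamma \ \Big|\ \int_T f\,d\mu = x^*_0 \right\},
\]
which is a $\sigma(G^1_{E^*},L^\infty\otimes E)$-closed convex subset of $\mathcal{S}^1_\Gamma$, hence itself convex and compact in that locally convex topology. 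By the Krein--Milman theorem $\mathcal{F}$ has an extreme point $g$; since $\mathcal{F}$ is a face of $\mathcal{S}^1_\Gamma$ (the integral map $f\mapsto\int f\,d\mu$ being affine and $x^*_0$ being... — more precisely one checks $\mathcal{F}$ is extreme in $\mathcal{S}^1_\Gamma$ because any nontrivial convex combination landing in $\mathcal{F}$ forces both components into $\mathcal{F}$ by the Lyapunov-convexity rigidity of the range), $g$ is in fact an extreme point of $\mathcal{S}^1_\Gamma$. The heart of the argument is then the pointwise characterization: an extreme point $g$ of $\mathcal{S}^1_\Gamma$ satisfies $g(t)\in\mathrm{ex}\,\Gamma(t)$ a.e. This is where saturation is indispensable. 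If $g(t_0)\notin\mathrm{ex}\,\Gamma(t_0)$ on a set $S$ of positive measure, one can, using a measurable selection argument, find a weakly$^*$ scalarly measurable $h:S\to E^*$, $h\not\equiv 0$, with $g(t)\pm h(t)\in\Gamma(t)$ for $t\in S$; integrable boundedness keeps $h$ Gelfand integrable. The obstruction is that $g+\mathbf{1}_S h$ and $g-\mathbf{1}_S h$ are honest selectors of $\Gamma$ but their integrals $x^*_0 \pm \int_S h\,d\mu$ need not equal $x^*_0$, so this does not immediately contradict extremality.

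To repair this, I would invoke the Lyapunov convexity theorem in the form of Corollary \ref{lyap2}: consider the $E^*$-valued $\mu$-continuous vector measure $A\mapsto \int_{A\cap S} h\,d\mu$; by saturation its range is weak$^*$ compact and convex, hence symmetric about its midpoint is not quite what I want — rather, I want a splitting of $S$. The clean route is to apply Lyapunov to the pair-valued vector measure $A\mapsto\bigl(\mu(A\cap S),\int_{A\cap S}h\,d\mu\bigr)\in\mathbb{R}\times E^*$: convexity of its range yields a set $S_1\subseteq S$ with $\mu(S_1)=\tfrac12\mu(S)$ and $\int_{S_1}h\,d\mu=\tfrac12\int_S h\,d\mu$, equivalently $\int_{S_1}h\,d\mu=\int_{S\setminus S_1}h\,d\mu$. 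Then $g_{\pm}:=g\pm(\mathbf{1}_{S_1}-\mathbf{1}_{S\setminus S_1})h$ are two distinct selectors of $\Gamma$ with $\int_T g_{\pm}\,d\mu = x^*_0$, i.e. both lie in $\mathcal{F}$, and $g=\tfrac12(g_++g_-)$, contradicting that $g$ is an extreme point of $\mathcal{F}$. Hence $g$ is a selector of $\mathrm{ex}\,\Gamma\subseteq\mathrm{ex}\,\overline{\mathrm{co}}^{\,\mathit{w}^*}\Gamma$ (after undoing the reduction) with $\int_T g\,d\mu=x^*_0$, giving $\subseteq$. For the reverse inclusion $\int_T\mathrm{ex}\,\overline{\mathrm{co}}^{\,\mathit{w}^*}\Gamma\,d\mu\subseteq\int_T\Gamma\,d\mu$, one uses $\mathrm{ex}\,\overline{\mathrm{co}}^{\,\mathit{w}^*}\Gamma(t)\subseteq\overline{\mathrm{co}}^{\,\mathit{w}^*}\Gamma(t)$ pointwise, so every selector of the left multifunction is a selector of $\overline{\mathrm{co}}^{\,\mathit{w}^*}\Gamma$, and then $\int_T\overline{\mathrm{co}}^{\,\mathit{w}^*}\Gamma\,d\mu=\int_T\Gamma\,d\mu$ by the convexity half of the Lyapunov theorem (Corollary \ref{lyap2}) applied to the vector measure $A\mapsto\int_A f\,d\mu$ for each selector $f$; alternatively this is the already-established ``relaxation identity'' under saturation. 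The main obstacle, as indicated, is the purely measure-theoretic extraction of the perturbation $h$ with measurable dependence on $t$ — this needs the von Neumann--Aumann measurable selection theorem applied to the multifunction $t\mapsto\{v\in E^*: v\neq 0,\ g(t)\pm v\in\Gamma(t)\}$ on the non-extreme-point set, together with a careful verification that this multifunction has a measurable graph, which is where separability of $E$ and the Suslin property of $(E^*,w^*)$ enter.
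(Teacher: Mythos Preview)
Your argument is correct and follows essentially the same route as the paper's: reduce to the convex-valued multifunction $\overline{\mathrm{co}}^{\,\mathit{w}^*}\Gamma$ via the identity $\int\Gamma\,d\mu=\int\overline{\mathrm{co}}^{\,\mathit{w}^*}\Gamma\,d\mu$ under saturation, take an extreme point of the fiber $\mathcal{F}=I^{-1}(x^*_0)\cap\mathcal{S}^1_{\overline{\mathrm{co}}^{\,\mathit{w}^*}\Gamma}$ using Lemma~\ref{lem1} and Krein--Milman, extract a measurable perturbation $h$ on the non-extreme set (the paper invokes \cite[Theorem IV.14]{cv77} for this), and then apply the Lyapunov convexity theorem to halve $S$ and produce two distinct elements of $\mathcal{F}$ averaging to $g$. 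One small correction: your aside that $\mathcal{F}$ is a face of $\mathcal{S}^1_\Gamma$ is false in general (the preimage of a non-extreme point under an affine map is not a face) and, more importantly, unnecessary---your contradiction at the end only uses that $g$ is extreme in $\mathcal{F}$, which is exactly what Krein--Milman gives you and exactly what the paper uses.
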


\begin{proof}
The saturation property guarantees that $\int\Gamma d\mu$ is weakly$^*\!$ compact and convex with equality $\int\Gamma d\mu=\int \overline{\mathrm{co}}^{\,\mathit{w}^*}\Gamma d\mu$ by \cite[Theorem 4]{po08} and \cite[Proposition 1]{sy08}. It thus suffices to show that $\int \overline{\mathrm{co}}^{\,\mathit{w}^*}\Gamma d\mu=\int \mathrm{ex}\,\overline{\mathrm{co}}^{\,\mathit{w}^*}\Gamma d\mu$. Define the integration operator $I:G^1(\mu,E^*)\to E^*$ by $I(f)=\int fd\mu$. It is easy to see that $I$ is continuous in the $\sigma(G^1_{E^*},L^\infty\otimes E)$- and the weak$^*\!$ topologies. Take a point $x^*\in I(\mathcal{S}_{\overline{\mathrm{co}}^{\,\mathit{w}^*}\Gamma}^1)=\int \overline{\mathrm{co}}^{\,\mathit{w}^*}\Gamma d\mu$ arbitrarily. Since $\overline{\mathrm{co}}^{\,\mathit{w}^*}\Gamma$ is integrably bounded and weakly$^*\!$ scalarly measurable, by Lemma \ref{lem1}, the set $I^{-1}(x^*)\cap \mathcal{S}_{\overline{\mathrm{co}}^{\,\mathit{w}^*}\Gamma}^1$ is a $\sigma(G^1_{E^*},L^\infty\otimes E)$-\hspace{0pt}compact, convex subset of $G^1(\mu,E^*)$, and hence, it has an extreme point $\hat{f}$ in view of the Krein--Milman theorem; see \cite[Corollary 7.68]{ab06}. It suffices to show that $\hat{f}\in \mathcal{S}_{\mathrm{ex}\,\overline{\mathrm{co}}^{\,\mathit{w}^*}\Gamma}^1$. Since $\hat{f}$ is a measurable selector of $\overline{\mathrm{co}}^{\,\mathit{w}^*}\Gamma$, the set $\{ t\in T\mid f(t)\in \overline{\mathrm{co}}^{\,\mathit{w}^*}\Gamma(t)\setminus \mathrm{ex}\,\overline{\mathrm{co}}^{\,\mathit{w}^*}\Gamma(t) \}$ is measurable; see \cite[p.\,108]{cv77}. Hence, if $\hat{f}\not\in \mathcal{S}_{\mathrm{ex}\,\overline{\mathrm{co}}^{\,\mathit{w}^*}\Gamma}^1$, then there exists $S\in \Sigma$ with $\mu(S)>0$ such that $\hat{f}(t)\not\in \mathrm{ex}\,\overline{\mathrm{co}}^{\,\mathit{w}^*}\Gamma(t)$ for every $t\in S$. By the integrable boundedness of $\overline{\mathrm{co}}^{\,\mathit{w}^*}\Gamma$ and \cite[Theorem IV.14]{cv77}, there exists $g\in G^1(\mu,E^*)$ such that $\hat{f}\pm g\in \mathcal{S}_{\overline{\mathrm{co}}^{\,\mathit{w}^*}\Gamma}^1$ and $g\ne 0$ on $S$. Define the vector measure $m:\Sigma\to E^*$ by $m(A)=\int_Agd\mu$ for $A\in \Sigma$. By Corollary \ref{lyap2}, the range of $m$ is weakly$^*\!$ compact and convex in $E^*$. Take $B\subset S$ with $m(B)=\frac{1}{2}m(S)$ and define $\hat{g}:T\to E^*$ by
$$
\hat{g}(t)=
\begin{cases}
\hspace{0.35cm} g(t) & \text{if $t\in S\setminus B$}, \\
-g(t) & \text{if $t\in B$}, \\
\quad 0 & \text{otherwise}.
\end{cases}
$$
Then $\hat{f}=\frac{1}{2}(\hat{f}+\hat{g})+\frac{1}{2}(\hat{f}-\hat{g})$ and $\hat{f}\pm \hat{g}\in \mathcal{S}_{\overline{\mathrm{co}}^{\,\mathit{w}^*}\Gamma}^1$. A simple calculation yields $\int \hat{g}dm=0$, and hence, $\hat{f}\pm \hat{g}\in I^{-1}(x^*)\cap \mathcal{S}_{\overline{\mathrm{co}}^{\,\mathit{w}^*}\Gamma}^1$. This contradicts the fact that $\hat{f}$ is an extreme point of $I^{-1}(x^*)\cap \mathcal{S}_{\overline{\mathrm{co}}^{\,\mathit{w}^*}\Gamma}^1$. 
\end{proof}

Theorem \ref{BBP} means that the Gelfand integral of any Gelfand integrable selector $f$ from $\Gamma$ is realized as that of some Gelfand integrable selector $g$ from $\mathrm{ex}\,\overline{\mathrm{co}}^{\,\mathit{w}^*}\Gamma$ in the sense that $\int fd\mu=\int gd\mu$. The saturation property of finite measure spaces guarantees the bang-bang principle for every integrably bounded, weakly$^*\!$ closed valued multifunction with measurable graph whenever $E$ is separable. 

Furthermore, the converse of Theorem \ref{BBP} is also true.  

\begin{thm}
\label{nec1}
Let $(T,\Sigma,\mu)$ be a finite measure space and $E$ be an infinite-\hspace{0pt}dimensional separable Banach space. If \eqref{bbp} holds for every integrably bounded, weakly$^*\!$ closed-valued multifunction $\Gamma:T\twoheadrightarrow E^*$ with $\mathrm{gph}\,\Gamma\in \Sigma\otimes \mathrm{Borel}(E^*,\textit{w}^*)$, then $(T,\Sigma,\mu)$ is saturated.
\end{thm}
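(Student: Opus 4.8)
The plan is to prove the contrapositive: assuming $(T,\Sigma,\mu)$ is not saturated, I would exhibit an integrably bounded, weakly$^*\!$ closed-valued multifunction $\Gamma:T\twoheadrightarrow E^*$ with $\mathrm{gph}\,\Gamma\in\Sigma\otimes\mathrm{Borel}(E^*,\textit{w}^*)$ for which \eqref{bbp} fails. By the characterization of saturation recalled above, non-saturation yields $S\in\Sigma$ with $\mu(S)>0$ such that $(S,\Sigma_S,\mu)$ is essentially countably generated; splitting $S$ into its purely atomic and nonatomic parts, either $S$ contains an atom, or, after passing to a subset of positive measure, $(S,\Sigma_S,\mu)$ is nonatomic and essentially countably generated and hence isomorphic modulo null sets to $[0,1]$ with Lebesgue measure. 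In both cases I would take $\Gamma$ of the ``segment'' form $\Gamma(t)=[0,f(t)]:=\{\lambda f(t)\mid\lambda\in[0,1]\}$ for a suitable bounded weakly$^*\!$ scalarly measurable $f:T\to E^*$ vanishing off $S$; such a $\Gamma$ has weakly$^*\!$ compact values, is integrably bounded, and has a measurable graph because $\mathrm{gph}\,\Gamma=\bigcap_{x\in D}\{(t,x^*)\mid\langle x^*,x\rangle\le\max\{0,\langle f(t),x\rangle\}\}$ for any countable dense $D\subset E$.

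The key reduction is that, for $\Gamma=[0,f]$, \eqref{bbp} is equivalent to a convexity statement about the vector measure $m_f:\Sigma\to E^*$, $m_f(A)=\int_Afd\mu$. Indeed $\mathcal{S}^1_\Gamma=\{\lambda f\mid\lambda:T\to[0,1]\text{ measurable}\}$, so
$$
\int_T\Gamma(t)d\mu=\left\{\int_T\lambda f\,d\mu\;\Big|\;0\le\lambda\le1\right\}=\overline{\mathrm{co}}^{\,\mathit{w}^*}m_f(\Sigma),
$$
the last equality following from the Krein--Milman theorem, since $\{\lambda\in L^\infty(\mu)\mid0\le\lambda\le1\}$ is weakly$^*\!$ compact and convex with the $\{0,1\}$-valued functions as its extreme points, and $\lambda\mapsto\int_T\lambda f\,d\mu$ is weakly$^*$-to-weakly$^*$ continuous because $\langle f(\cdot),x\rangle\in L^1(\mu)$ for every $x\in E$. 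On the other hand $\mathrm{ex}\,\overline{\mathrm{co}}^{\,\mathit{w}^*}\Gamma(t)=\{0,f(t)\}$, whose selectors are exactly the functions $f\chi_A$, $A\in\Sigma$, so $\int_T\mathrm{ex}\,\overline{\mathrm{co}}^{\,\mathit{w}^*}\Gamma(t)d\mu=m_f(\Sigma)$. Hence \eqref{bbp} for $\Gamma$ says precisely that $m_f(\Sigma)=\overline{\mathrm{co}}^{\,\mathit{w}^*}m_f(\Sigma)$, and to defeat it I only need one point of $\overline{\mathrm{co}}^{\,\mathit{w}^*}m_f(\Sigma)$ that lies outside the range $m_f(\Sigma)$; I would use $\tfrac12m_f(T)$, which belongs to $\int_T\Gamma d\mu$ via $\lambda\equiv\tfrac12$.

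It remains to construct the density. In the nonatomic case, identify $S$ with $[0,1]$ under Lebesgue measure, fix a biorthogonal sequence $(z_n,z_n^*)\in E\times E^*$ (which exists because $E$ is infinite dimensional) and normalize it to $(y_n,y_n^*)$ with $\langle y_m^*,y_n\rangle=\delta_{mn}$ and $\|y_n^*\|\le 1$, let $(w_n)_{n\ge0}$ be the Walsh system on $[0,1]$ — a complete orthonormal system of $\{-1,+1\}$-valued functions with $\int_0^1w_0\,d\mu=1$ and $\int_0^1w_n\,d\mu=0$ for $n\ge1$ — and set $f(t)=\sum_{n\ge0}2^{-n-1}w_n(t)y_n^*$. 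The series converges absolutely in $E^*$ with $\|f(t)\|\le1$, so $f$ is integrably bounded and weakly$^*\!$ scalarly measurable, and $\langle m_f(A),y_k\rangle=2^{-k-1}\int_Aw_k\,d\mu$ for every $k$. If $\tfrac12m_f(T)=m_f(A)$ for some $A\in\Sigma$, then pairing with $y_k$ gives $\int_0^1(\chi_A-\tfrac12)w_k\,d\mu=0$ for all $k$, so $\chi_A-\tfrac12\in L^2[0,1]$ is orthogonal to a complete system and hence $\chi_A=\tfrac12$ a.e., which is absurd; thus $\Gamma=[0,f]$ violates \eqref{bbp}. In the atomic case, let $A_0\subset S$ be an atom, pick $x^*\in E^*\setminus\{0\}$ (possible since $E\ne\{0\}$), and take $f=x^*\chi_{A_0}$; as every measurable $\lambda:T\to[0,1]$ is a.e.\ constant on $A_0$, one gets $\int_T[0,f]\,d\mu=[0,\mu(A_0)x^*]$, a nondegenerate segment, whereas $\int_T\mathrm{ex}\,\overline{\mathrm{co}}^{\,\mathit{w}^*}[0,f]\,d\mu=\{0,\mu(A_0)x^*\}$, so \eqref{bbp} fails again. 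This establishes the contrapositive and proves the theorem.

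The step I expect to be the crux is producing an \emph{integrably bounded} $E^*$-valued density whose vector measure has a nonconvex range: the standard witnesses to the failure of the Lyapunov convexity theorem in infinite dimensions, such as $A\mapsto(\langle\chi_A,w_n\rangle)_n\in\ell^2$, have unbounded variation and so are not of the form $m_f$ with $f$ integrably bounded, which is why the converse half of Corollary \ref{lyap2} (or \cite{ks13}) cannot simply be quoted here. The remedy above is to damp the Walsh coefficients by the summable weights $2^{-n-1}$ and transport them into $E^*$ along a biorthogonal system of $E$, which keeps $f$ bounded (indeed $\|f(\cdot)\|\le1$) while retaining enough of the completeness of the Walsh system to separate $\tfrac12m_f(T)$ from $m_f(\Sigma)$. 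The remaining verifications — the measurable-graph and selector identities for $\Gamma=[0,f]$, and the isomorphism of a nonatomic essentially countably generated space with a Lebesgue interval — are routine, being respectively of the type used in Section \ref{subsec} and classical.
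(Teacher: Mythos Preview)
Your argument is correct and rests on the same skeleton as the paper's: take $\Gamma(t)=\overline{\mathrm{co}}\{0,f(t)\}=[0,f(t)]$ for a suitable Gelfand integrable $f$, observe that $\int_T\Gamma\,d\mu$ is convex while $\int_T\mathrm{ex}\,\overline{\mathrm{co}}^{\,\mathit{w}^*}\Gamma\,d\mu=R_f:=\{\int_Af\,d\mu\mid A\in\Sigma\}$, and conclude that \eqref{bbp} fails once $R_f$ is nonconvex. The genuine divergence is in how the ``bad'' density $f$ is produced. The paper simply invokes \cite[Lemma 4]{po08} (or \cite[Remark 1(2)]{sy08}), which already delivers, for any nonsaturated $(T,\Sigma,\mu)$ and any infinite-dimensional $E$, a Bochner integrable $f:T\to E^*$ with $R_f$ nonconvex; the proof then fits in a short paragraph with no case distinction. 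You instead rebuild that lemma by hand: split into the atomic and nonatomic-but-separable cases, and in the latter transport the Walsh system into $E^*$ along a normalized biorthogonal sequence with the damping weights $2^{-n-1}$ so that $f$ stays bounded while the completeness of the Walsh system forces $\tfrac12 m_f(T)\notin m_f(\Sigma)$. Your route is longer but fully self-contained, and your closing diagnosis is exactly right: the converse half of Corollary \ref{lyap2} only hands you a $\mu$-continuous vector measure with nonconvex range, not one of the form $m_f$ with $f$ integrably bounded, so one cannot simply quote it; the paper circumvents this by citing a sharper external result, whereas you supply the explicit construction. Two minor remarks: the full equality $\int_T\Gamma\,d\mu=\overline{\mathrm{co}}^{\,\mathit{w}^*}m_f(\Sigma)$ is more than you need---the single point $\tfrac12 m_f(T)=\int_T\tfrac12 f\,d\mu\in\int_T\Gamma\,d\mu$ already yields the contradiction---and the ``identification'' of the nonatomic essentially countably generated $S$ with a Lebesgue interval should be read at the level of measure algebras (hence of $L^2$ spaces), which is all that is required to import a complete orthonormal system of $\{-1,+1\}$-valued functions.
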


\begin{proof}
Suppose, to the contrary, that $(T,\Sigma,\mu)$ is not saturated. Then there exists a Bochner (and hence Gelfand) integrable function $f\in G^1(\mu,E^*)$ such that the range of the indefinite Bochner integral $R_f=\{ \int_Afd\mu\in E^*\mid A\in \Sigma \}$ is not convex; see \cite[Lemma 4]{po08} or \cite[Remark 1(2)]{sy08}. Since the essential range of Bochner integrable functions is separable (see \cite[Theorem II.1.2]{du77}), we may assume that $f$ takes values in a separable subspace $V$ of $E^*$. Let $\Gamma_f:T\twoheadrightarrow V$ be a multifunction defined by $\Gamma_f(t)=\overline{\mathrm{co}}\{ 0,f(t) \}$, where the closed convex hull is taken with respect to the dual norm. Then $\Gamma_f$ is an integrably bounded, weakly$^*\!$ compact, convex-valued multifunction with $\mathrm{gph}\,\Gamma_f\in \Sigma\otimes \mathrm{Borel}(E^*,\textit{w}^*)$. Since $\mathcal{S}^1_{\Gamma_f}$ is convex in $G^1(\mu,E^*)$, the Gelfand integral $\int \Gamma_f d\mu$ is convex in $E^*$. Moreover, since the Gelfand integrable selectors from $\Gamma_f$ precisely coincide with the Bochner integrable selectors, we have $\mathrm{ex}\,\Gamma_f(t)=\{ 0,f(t) \}$ and $\mathcal{S}^1_{\mathrm{ex}\,\Gamma_f}=\{ f\chi_A \mid A\in \Sigma \}$. Since $\overline{\mathrm{co}}\{ 0,f(t) \}=\overline{\mathrm{co}}^{\,\mathit{w}^*}\{ 0,f(t) \}$ (see \cite[Theorem 5.98]{ab06}), if $\Gamma_f$ satisfies \eqref{bbp}, then $\int \Gamma_f d\mu=\int \mathrm{ex}\,\overline{\mathrm{co}}^{\,\mathit{w}^*}\Gamma_fd\mu=\int \mathrm{ex}\,\overline{\mathrm{co}}\,\Gamma_fd\mu=R_f$, an obvious contradiction to the convexity of $\int \Gamma_f d\mu$.
\end{proof}

\begin{rem}
The equivalence of saturation and the Lyapunov convexity theorem was established in \cite{ks13} for the case with separable Banach spaces and in \cite{gp13} for the case with dual spaces of a separable Banach space. Proposition \ref{lyap1} covers the abovementioned results and improves \cite{ks15}. The equivalence of saturation and the ``convexity principle'' in the sense that $\int\Gamma d\mu=\int\overline{\mathrm{co}}^{\,\mathit{w}^*}\Gamma d\mu$ for every integrably bounded, weakly$^*\!$ closed-valued multifunction $\Gamma$ with measurable graph was established in \cite{po08,sy08}; see also the earlier work by \cite{su97} in the setting of nonatomic Loeb spaces (which form a special class of saturated measure spaces). Theorems \ref{BBP} and \ref{nec1} are another characterization of saturation in terms of \eqref{bbp}. See \cite{ks14b} for a characterization of saturation in terms of the bang-bang principle with Bochner integrals in separable Banach spaces and \cite{ks16a} for that in terms of the bang-bang principle with Bourbaki--Kluv\'anek--Lewis integrals in separable locally convex spaces.  
\end{rem}

\subsection{The Purification Principle}
We denote by $\Pi(X)$ the set of probability measures on a Hausdorff topological space $X$ furnished with the Borel $\sigma$-algebra $\mathrm{Borel}(X)$. We endow $\Pi(X)$ with the \textit{topology of weak convergence} of probability measures (also called the \textit{narrow topology}), which is the coarsest topology on $\Pi(X)$ for which the integral functional $P\mapsto \int u dP$ on $\Pi(X)$ is continuous for every bounded continuous function $u:X\to \R$. Then $\Pi(X)$ is a Suslin space if and only if $X$ is a Suslin space; see \cite[Theorem 2.7 of Appendix in Part II]{sc73}. If $X$ is a Polish space, then the Borel $\sigma$-algebra on $\Pi(X)$ is the smallest $\sigma$-algebra for which the real-valued function $P\mapsto P(A)$ on $\Pi(X)$ is measurable for every $A\in \Sigma$; see \cite[Theorem 7.25]{bs78}.  

By $\M(T,X)$ we denote the space of measurable functions from $T$ to $X$ and by $\mathcal{R}(T,X)$ the space of measurable functions from $T$ to $\Pi(X)$. Each element in $\M(T,X)$ is called a \textit{control} and that in $\mathcal{R}(T,X)$ is called a \textit{relaxed control} (a \textit{Young measure}, a \textit{stochastic kernel}, or a \textit{transition probability}), which is a probability measure-valued control. If $X$ is a Polish space, then for every function $\lambda:T\to \Pi(X)$, the real-\hspace{0pt}valued function $t\mapsto \lambda(t)(C)$ is measurable for every $C\in \mathrm{Borel}(X)$ if and only if $\lambda$ is measurable. By $\Delta(X)$, we denote the set of Dirac measures on $X$, i.e., $\delta_x\in \Delta(X)$ whenever for every $C\in \mathrm{Borel}(X)$: $\delta_x(C)=1$ if $x\in C$ and $\delta_x(C)=0$ otherwise. If $X$ is a Polish space, then each control $f\in \M(T,X)$ is identified with the Dirac measure-valued control $\delta_{f(\cdot)}\in \mathcal{R}(T,X)$ satisfying $\delta_{f(t)}\in \Delta(X)$ for every $t\in T$. Given a multifunction $U:T\twoheadrightarrow X$, we say that $\lambda\in \mathcal{R}(T,X)$ is \textit{concentrated} on $U$ if $\lambda(t)(U(t))=1$ a.e.\ $t\in T$. 

We say that a function $\Phi:T\times X\to E^*$ is \textit{integrably bounded} if the multifunction $t\mapsto \Phi(t,X)\subset E^*$ is integrably bounded, i.e., there exists $\varphi\in L^1(\mu)$ such that $\| \Phi(t,x) \|\le \varphi(t)$ for every $(t,x)\in T\times X$. Recall that the real-valued function $\varphi:T\times X\to \R$ is a \textit{Carath\'eodory function} if $t\mapsto \varphi(t,x)$ is measurable for every $x\in X$ and $x\mapsto \varphi(t,x)$ is continuous for  every $t\in T$. If $X$ is a Polish space, then the Carath\'eodory function $\varphi$ is jointly measurable; see \cite[Lemma 4.51]{ab06}.

The proof of the following result is in \cite[Lemma 2.1]{ks14b}. 

\begin{lem}
\label{lem2}
Let $(T,\Sigma,\mu)$ be a probability space and $C$ be a weakly$^*\!$ closed convex subset of the dual space $E^*$ of a Banach space $E$. If $f:T\to E^*$ is a Gelfand integrable function with $f(T)\subset C$, then $\int fd\mu\in C$.   
\end{lem}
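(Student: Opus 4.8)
The plan is to reduce the membership $\int f\,d\mu\in C$ to a standard separation argument, exploiting the fact that a weakly$^*\!$ closed convex set equals the intersection of the closed half-spaces of $(E^*,\mathit{w}^*)$ containing it. First I would recall that the topological dual of $E^*$ under the weak$^*\!$ topology is canonically $E$ itself, so every $\sigma(E^*,E)$-\hspace{0pt}continuous linear functional on $E^*$ is of the form $x^*\mapsto\langle x^*,x \rangle$ for some $x\in E$. Consequently, the Hahn--Banach separation theorem applied in the locally convex space $(E^*,\mathit{w}^*)$ gives the characterization: a point $x^*_0\in E^*$ belongs to the weakly$^*\!$ closed convex set $C$ if and only if $\langle x^*_0,x \rangle\le s(x,C)$ for every $x\in E$, where $s(x,C)=\sup_{y^*\in C}\langle y^*,x \rangle$ is the support function (a point outside $C$ is strictly separated from $C$ by some such evaluation functional).

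Next I would verify this inequality for $x^*_0=\int_Tf\,d\mu$. Fix $x\in E$. If $s(x,C)=+\infty$ the required bound is trivial, so assume $s(x,C)<+\infty$. Since $f(T)\subset C$, we have $\langle f(t),x \rangle\le s(x,C)$ for every $t\in T$, and $\langle f(\cdot),x \rangle$ is integrable because $f$ is Gelfand integrable. Hence, by the defining property of the Gelfand integral and monotonicity of the integral against the \emph{probability} measure $\mu$,
$$
\langle x^*_0,x \rangle=\int_T\langle f(t),x \rangle\,d\mu\le s(x,C)\,\mu(T)=s(x,C).
$$
As $x\in E$ was arbitrary, the separation characterization from the first step yields $x^*_0\in C$, which is the claim.

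There is no serious obstacle here; the only point requiring care is that the functionals furnished by Hahn--Banach separation in $(E^*,\mathit{w}^*)$ are genuine evaluations at elements of $E$ rather than at arbitrary elements of $E^{**}$, which is exactly what permits passing from the pointwise bound on $\langle f(t),x \rangle$ to a bound on the coordinate $\langle \int_Tf\,d\mu,x \rangle$ of the Gelfand integral. One should also note that the hypothesis that $\mu$ is a probability measure (not merely finite) is used precisely in the last equality $s(x,C)\,\mu(T)=s(x,C)$; for a general finite measure the same computation would instead give $\int_Tf\,d\mu\in\mu(T)\,C$.
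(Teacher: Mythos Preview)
Your argument is correct. The paper does not supply its own proof of this lemma; it simply refers the reader to \cite[Lemma~2.1]{ks14b}. Your approach---identifying the $\sigma(E^*,E)$-continuous linear functionals with evaluations at points of $E$, invoking Hahn--Banach separation in $(E^*,\mathit{w}^*)$ to reduce membership in $C$ to the family of inequalities $\langle x_0^*,x\rangle\le s(x,C)$, and then verifying each such inequality by integrating the pointwise bound against the probability measure---is the canonical one and is almost certainly what appears in the cited reference. Your closing remarks on why the functionals must come from $E$ (not $E^{**}$) and on the role of $\mu(T)=1$ are apt and show you have isolated exactly where each hypothesis is used.
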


The next result is the \textit{purification principle} in infinite dimensions, an analogue of \cite[Theorem 5.1]{ks14b} in the dual space setting. 

\begin{thm}[purification principle]
\label{PP1}
Let $(T,\Sigma,\mu)$ be a saturated finite measure space, $E$ be a separable Banach space, and $X$ be a Suslin space. If $\Phi:T\times X\to E^*$ is an integrably bounded measurable function such that $\Phi(t,\cdot):X\to E^*$ is continuous in the weak$^*\!$ topology of $E^*$ for every $t\in T$ and $U:T\twoheadrightarrow X$ is a compact-\hspace{0pt}valued multifunction with $\mathrm{gph}\,U\in \Sigma\otimes \mathrm{Borel}(X)$, then for every $\lambda\in \mathcal{R}(T,X)$ concentrated on $U$ there exists $f \in \M(T,X)$ with $f(t)\in U(t)$ a.e.\ $t\in T$ such that
\begin{equation}
\label{pp1}
\int_T \int_X\Phi(t,x)\lambda(t,dx)d\mu=\int_T \Phi(t,f(t))d\mu. \tag{PP}
\end{equation}
\end{thm}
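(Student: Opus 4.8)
The plan is to reduce the statement to the bang-bang principle (Theorem \ref{BBP}) applied to the multifunction $\Gamma:T\twoheadrightarrow E^*$ defined by $\Gamma(t)=\Phi(t,U(t))$, combined with a barycenter argument supplied by Lemma \ref{lem2} and a concluding measurable selection. First I would verify that $\Gamma$ satisfies the hypotheses of Theorem \ref{BBP}. Since $U(t)$ is compact and $\Phi(t,\cdot)$ is weak$^*\!$ continuous, each $\Gamma(t)$ is weak$^*\!$ compact, hence weak$^*\!$ closed; and $\Gamma$ is integrably bounded because $\Phi$ is. Because $\Phi$ is measurable in $t$ and weak$^*\!$ continuous in $x$, it is a Carath\'eodory-type map into the Suslin space $(E^*,\mathit{w}^*)$, hence jointly $\Sigma\otimes\mathrm{Borel}(X)/\mathrm{Borel}(E^*,\mathit{w}^*)$-measurable (reduce to real-valued Carath\'eodory functions $(t,x)\mapsto\langle\Phi(t,x),e\rangle$ along a countable dense set and invoke \cite[Lemma 4.51]{ab06}). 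Consequently $s(e,\Gamma(\cdot))=\sup_{x\in U(\cdot)}\langle\Phi(\cdot,x),e\rangle$ is measurable for every $e\in E$ (a supremum of a Carath\'eodory integrand over the measurable compact-valued multifunction $U$), so $\Gamma$ is weak$^*\!$ scalarly measurable, and $\mathrm{gph}\,\Gamma\in\Sigma\otimes\mathrm{Borel}(E^*,\mathit{w}^*)$ by the measurable projection theorem in the Suslin setting. Saturation then gives the convexity principle underlying Theorem \ref{BBP}, namely $\int_T\Gamma\,d\mu=\int_T\overline{\mathrm{co}}^{\,\mathit{w}^*}\Gamma\,d\mu$.

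Next I would show that $g(t):=\int_X\Phi(t,x)\lambda(t,dx)$ defines a Gelfand integrable selector of $\overline{\mathrm{co}}^{\,\mathit{w}^*}\Gamma$. For each $e\in E$ the integrand $(t,x)\mapsto\langle\Phi(t,x),e\rangle$ is a bounded Carath\'eodory function, hence jointly measurable, so $t\mapsto\langle g(t),e\rangle=\int_X\langle\Phi(t,x),e\rangle\lambda(t,dx)$ is measurable; since $\|g(t)\|\le\varphi(t)$, we obtain $g\in G^1(\mu,E^*)$. Fixing $t$ with $\lambda(t)(U(t))=1$ and restricting $\lambda(t)$ to the compact set $U(t)$, Lemma \ref{lem2} applies to the probability space $(U(t),\mathrm{Borel}(U(t)),\lambda(t))$, the Gelfand integrable map given by the restriction of $\Phi(t,\cdot)$ to $U(t)$, and the weak$^*\!$ closed convex set $\overline{\mathrm{co}}^{\,\mathit{w}^*}\Gamma(t)$, which contains $\Phi(t,U(t))=\Gamma(t)$; it yields $g(t)=\int_{U(t)}\Phi(t,x)\lambda(t,dx)\in\overline{\mathrm{co}}^{\,\mathit{w}^*}\Gamma(t)$. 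Therefore $\int_Tg\,d\mu\in\int_T\overline{\mathrm{co}}^{\,\mathit{w}^*}\Gamma\,d\mu=\int_T\Gamma\,d\mu$, so there exists $h\in\mathcal{S}^1_\Gamma$ with $\int_Th\,d\mu=\int_Tg\,d\mu=\int_T\int_X\Phi(t,x)\lambda(t,dx)\,d\mu$.

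Finally I would recover $f$ from $h$ by measurable selection. Consider $\Lambda(t):=\{x\in U(t)\mid\Phi(t,x)=h(t)\}$, which is nonempty because $h(t)\in\Gamma(t)=\Phi(t,U(t))$. Its graph equals $\mathrm{gph}\,U\cap\{(t,x)\mid\Phi(t,x)-h(t)=0\}$, which lies in $\Sigma\otimes\mathrm{Borel}(X)$ since $(t,x)\mapsto\Phi(t,x)-h(t)$ is measurable into $(E^*,\mathit{w}^*)$ and $\{0\}$ is weak$^*\!$ closed. As $X$ is Suslin and $(T,\Sigma,\mu)$ is complete, the Aumann--von Neumann selection theorem (as cited in Section \ref{subsec}) provides a measurable $f:T\to X$ with $f(t)\in\Lambda(t)$ a.e., i.e.\ $f(t)\in U(t)$ and $\Phi(t,f(t))=h(t)$ a.e.\ $t\in T$; integrating gives \eqref{pp1}. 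The main obstacle here is entirely the measurability bookkeeping --- joint measurability of $\Phi$ valued in the Suslin space $(E^*,\mathit{w}^*)$, measurability of the barycenter $g$ and of $s(e,\Gamma)$, and the graph-measurability of $\Gamma$ and $\Lambda$; once these are secured, the substantive content is carried by Theorem \ref{BBP} (equivalently, the saturation-driven convexity of the Gelfand integral) and by Lemma \ref{lem2}.
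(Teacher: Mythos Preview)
Your proposal is correct and follows essentially the same route as the paper. Both arguments define the multifunction with values $\Phi(t,U(t))$ (the paper takes its weak$^*\!$ closed convex hull and calls that $\Gamma$, you keep the nonconvex version and call that $\Gamma$), use Lemma~\ref{lem2} to place the barycenter $t\mapsto\int_X\Phi(t,x)\lambda(t,dx)$ in $\overline{\mathrm{co}}^{\,\mathit{w}^*}\Phi(t,U(t))$, invoke the saturation-driven convexity principle $\int\Phi(\cdot,U(\cdot))\,d\mu=\int\overline{\mathrm{co}}^{\,\mathit{w}^*}\Phi(\cdot,U(\cdot))\,d\mu$ to obtain a selector of $\Phi(\cdot,U(\cdot))$ with the same integral, and finish with a measurable implicit-function/selection step (the paper cites Filippov from \cite[Theorem III.38]{cv77}, you use the Aumann--von~Neumann selection theorem; these are interchangeable here). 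The only real difference is that you spell out the measurability bookkeeping (joint measurability of $\Phi$, measurability of $s(e,\Gamma)$, graph measurability of $\Gamma$ and of the implicit multifunction $\Lambda$) that the paper leaves implicit, and you phrase the key step as invoking Theorem~\ref{BBP} while in fact using only the convexity identity stated inside its proof---exactly what the paper does as well.
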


\begin{proof}
Define the multifunction $\Gamma:T\twoheadrightarrow E^*$ by $\Gamma(t)=\overline{\mathrm{co}}^{\,\mathit{w}^*}\Phi(t,U(t))$. Then $\Gamma$ is integrably bounded and weakly$^*\!$ compact, convex-\hspace{0pt}valued because of the hypotheses on $\Phi$ and $U$. Since $\Phi(t,\cdot)$ is bounded and Borel measurable for every $t\in T$, it is also weakly$^*\!$ scalarly integrable as a function from $X$ to $E^*$. Thus, we can define the Gelfand integral of $\Phi(t,\cdot)$ by $g_\lambda(t)=\int \Phi(t,x)\lambda(t,dx)$ for any $\lambda\in \mathcal{R}(T,X)$ concentrated on $U$. It is evident that the function $g_\lambda:T\to E^*$ is Gelfand integrable. Applying Lemma \ref{lem2} for every $t\in T$ to the probability space $(X,\mathrm{Borel}(X),\lambda(t))$, the weakly$^*\!$ closed convex set $\Gamma(t)\subset E^*$, and the Gelfand integrable function $\Phi(t,\cdot)$, we have $g_\lambda\in \mathcal{S}_\Gamma^1$. It follows from $\int\Gamma d\mu=\int \overline{\mathrm{co}}^{\,\mathit{w}^*}\Gamma d\mu$ (see \cite[Theorem 4]{po08} and \cite[Proposition 1]{sy08}) that there exists a Gelfand integrable selector $g$ from $\Gamma$ such that $\int gd\mu=\int g_\lambda d\mu$. In view of $g(t)\in \Phi(t,U(t))$ and Filippov's implicit function theorem (see \cite[Theorem III.38]{cv77}), there exists a measurable function $f:T\to X$ such that $g(t)=\Phi(t,f(t))$ and $f(t)\in U(t)$ a.e.\ $t\in T$. This $f$ is a desired control function.
\end{proof}

Theorem \ref{PP1} means that any ``relaxed'' control system $t\mapsto\hat{\Phi}(t,\lambda(t)):=\int\Phi(t,x)\lambda(t,dx)$   operated by $\lambda\in \mathcal{R}(T,X)$ consistent with the control set $U(t)$ is realized by adopting a ``purified'' control system $t\mapsto\Phi(t,f(t))$ operated by $f\in \M(T,X)$ with the feasibility constraint $f(t)\in U(t)$ in such a way that its Gelfand integral over $T$ is preserved with $\int\hat{\Phi}(t,\lambda(t))d\mu=\int\Phi(t,f(t))d\mu$. 

The converse of Theorem \ref{PP1} is as follows. 

\begin{thm}
\label{nec2}
Let $(T,\Sigma,\mu)$ be a finite measure space, $E$ be an infinite-dimensional separable Banach space, and $X$ be an uncountable compact Polish space. If for every integrably bounded measurable function $\Phi:T\times X\to E^*$ such that $\Phi(t,\cdot):X\to E^*$ is continuous in the weak$^*\!$ topology of $E^*$ for every $t\in T$ and for every $\lambda\in \mathcal{R}(T,X)$ there exists $f \in \M(T,X)$ satisfying \eqref{pp1}, then $(T,\Sigma,\mu)$ is saturated. 
\end{thm}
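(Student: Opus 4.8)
The plan is to argue by contraposition: assume $(T,\Sigma,\mu)$ is \emph{not} saturated and produce a single admissible pair $(\Phi,\lambda)$ for which \eqref{pp1} has no solution $f\in\M(T,X)$, contradicting the hypothesis. I would start from the non-saturation witness already used in the proof of Theorem~\ref{nec1}, but placed inside a \emph{proper} subspace: since $E^*$ is infinite dimensional it contains a proper closed infinite-dimensional separable subspace $Y\subset E^*$, and applying the construction behind Theorem~\ref{nec1} (\cite[Lemma~4]{po08}, \cite[Remark~1(2)]{sy08}) to $(T,\Sigma,\mu)$ and $Y$ yields a Bochner (hence Gelfand) integrable $h:T\to Y$ whose indefinite-integral range $R_h=\{\int_A h\,d\mu\mid A\in\Sigma\}$ is not convex. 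I would then fix a vector $w\in E^*\setminus Y$, to serve as a ``transverse detour direction''.

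Next I would build $\Phi$ so that each feasible image $\Phi(t,\cdot)$ is a norm-continuous arc joining $0$ to $h(t)$ that touches the line $\mathbb{R}h(t)$ only at those two endpoints. Concretely, fix a compatible metric $d$ on $X$ and two distinct points $a,b\in X$, put $\rho_1(x)=d(x,a)/(d(x,a)+d(x,b))$ and $\rho_2(x)=\min\{\rho_1(x),1-\rho_1(x)\}$ — both continuous from $X$ into $[0,1]$, with $\rho_1(a)=0$, $\rho_1(b)=1$, and $\{\rho_2=0\}=\rho_1^{-1}(\{0,1\})=\{a,b\}$ — and set $\Phi(t,x)=\rho_1(x)h(t)+\rho_2(x)w$. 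This $\Phi$ is Carath\'eodory (hence jointly measurable), weakly$^*\!$ continuous in $x$, and integrably bounded by $\|h(\cdot)\|+\|w\|\in L^1(\mu)$. Since $R_h$ is not convex I would pick $v\in\mathrm{co}(R_h)\setminus R_h$, write $v=\sum_{i=1}^n\alpha_i\int_{A_i}h\,d\mu=\int_T p\,h\,d\mu$ with $p:=\sum_{i=1}^n\alpha_i\chi_{A_i}:T\to[0,1]$ measurable, and take $\lambda(t)=(1-p(t))\delta_a+p(t)\delta_b\in\mathcal{R}(T,X)$. Because $\Phi(t,a)=0$ and $\Phi(t,b)=h(t)$, the left-hand side of \eqref{pp1} collapses to $\int_T p(t)h(t)\,d\mu=v$.

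Finally I would derive the contradiction. If some $f\in\M(T,X)$ satisfied \eqref{pp1}, then $v=\int_T\rho_1(f(t))h(t)\,d\mu+\bigl(\int_T\rho_2(f(t))\,d\mu\bigr)w$; the first summand is a Bochner integral of a $Y$-valued map and so lies in $Y$, and $v\in\mathrm{co}(R_h)\subset Y$, hence $\bigl(\int_T\rho_2(f(t))\,d\mu\bigr)w\in Y$, which forces $\int_T\rho_2(f(t))\,d\mu=0$ because $w\notin Y$. Since $\rho_2\ge0$ this gives $f(t)\in\{a,b\}$ a.e., so with $A=\{t\mid f(t)=b\}\in\Sigma$ one gets $v=\int_A h\,d\mu\in R_h$, contradicting the choice of $v$. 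Thus \eqref{pp1} fails for this $(\Phi,\lambda)$, and therefore $(T,\Sigma,\mu)$ must be saturated. The step I expect to be the real obstacle is securing the transverse direction $w$: one needs the non-convexity witness $h$ to take values in a proper closed subspace of $E^*$ (this is where infinite-dimensionality of $E^*$ is genuinely used), for otherwise a purified control can reproduce the effect of the relaxed one and no contradiction arises; once $w$ is available, the measurability of $\Phi$ and $\lambda$ and the evaluation of the two sides of \eqref{pp1} are routine.
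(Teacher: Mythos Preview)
Your argument is correct, but it takes a genuinely different route from the paper's. The paper does not construct a counterexample from scratch: it invokes \cite[Theorem~3]{po09}, which says directly that if $(T,\Sigma,\mu)$ is not saturated and $X$ is an uncountable compact Polish space, then there exist an integrably bounded Carath\'eodory function $\varphi:T\times X\to\R$ and a relaxed control $\lambda\in\mathcal{R}(T,X)$ for which the \emph{scalar} purification equation fails; it then lifts this to $E^*$ by setting $\Phi(t,x)=\varphi(t,x)x^*$ for any fixed nonzero $x^*\in E^*$. By contrast, you bypass the external scalar result and build the counterexample from the same vector-measure ingredient used in Theorem~\ref{nec1} (the Bochner integrable $h$ with nonconvex indefinite-integral range), adding a transverse direction $w\notin Y$ to force any purifying $f$ to take values in $\{a,b\}$. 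What the paper's approach buys is brevity and a clean reduction to a known characterization; what yours buys is self-containment relative to the tools already developed in this paper (you only reuse \cite[Lemma~4]{po08}/\cite[Remark~1(2)]{sy08}, applied to the infinite-dimensional separable subspace $Y$), a transparent geometric mechanism for the failure of \eqref{pp1}, and a slight strengthening---your construction needs only two distinct points of $X$, so uncountability of $X$ is not actually used. The one point worth making explicit in your write-up is that the non-saturation $\Rightarrow$ nonconvex-range results you cite are stated for arbitrary infinite-dimensional (separable) Banach targets, so applying them to the proper closed subspace $Y\subset E^*$ rather than to $E^*$ itself is legitimate.
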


\begin{proof}
It follows from \cite[Theorem 3]{po09} that if $(T,\Sigma,\mu)$ is not saturated, then there exists an integrably bounded Carath\'eodory function $\varphi:T\times X\to \R$ and $\lambda\in \mathcal{R}(T,X)$ such that no $f\in \M(T,X)$ with $f(t)\in U(t)$ a.e.\ $t\in T$ satisfies
$$
\int_T \int_X\varphi(t,x)\lambda(t,dx)d\mu=\int_T \varphi(t,f(t))d\mu.
$$
Let $x^*\in E^*\setminus \{ 0 \}$ be given arbitrarily and define the function $\Phi:T\times X\to E^*$ by $\Phi(t,x)=\varphi(t,x)x^*$. Obviously, $\Phi$ is integrably bounded, (jointly) measurable, and $\Phi(t,\cdot)$ is continuous in the weak$^*\!$ topology of $E^*$ for every $t\in T$, but \eqref{pp1} is false. 
\end{proof}

\begin{rem}
For the case with $E=E^*=\R^n$, \eqref{pp1} holds under the nonatomicity hypothesis, which is a well-known result in control theory attributed to \cite[Theorem IV.3.14]{wa72}. The equivalence of saturation and the purification principle was established in \cite{ls09,po09} for the case where $\Phi$ takes values in the countable product $\R^\N$ of the real line ($\R^\N$ is a Fr\'echet space). In particular, the observation in \cite[Example 2.7]{ls06} that the purification principle fails without the use of nonatomic Loeb measures when $\Phi$ takes values in $\R^\N$ provides the basis for the necessity of saturation in \cite{ls09}. The equivalence of saturation and the purification principle for the case where $\Phi$ takes values in a separable Banach space is due to \cite{ks14b}. Theorems \ref{PP1} and \ref{nec2} are another characterization of saturation in terms of \eqref{pp1} in dual spaces of a separable Banach space. \end{rem}

\subsection{The Density Property}
Let $X$ be a Polish space. An extended real-\hspace{0pt}valued function $\varphi:T\times X\to \R\cup \{+\infty\}$ is called an \textit{integrand} if it is $\Sigma\otimes \mathrm{Borel}(X)$-\hspace{0pt}measurable. An integrand $\varphi$ is called a \textit{normal integrand} if $\varphi(t,\cdot)$ is lower semicontinuous on $X$ for every $t\in T$. A Carath\'eodory integrand is a normal integrand. Denote by $\C^1(T\times X,\mu)$ the space of integrably bounded Carath\'eodory integrands on $T\times X$. For each integrand $\varphi$, define the integral functional $J_\varphi:\mathcal{R}(T,X)\to \R\cup \{\pm \infty \}$ by $J_\varphi(\lambda)=\iint\varphi(t,x)\lambda(t,dx)d\mu$. The \textit{weak topology} on $\mathcal{R}(T,X)$ is defined as the coarsest topology for which every integral functionals $J_\varphi$, $\varphi\in \C^1(T\times X,\mu)$, are continuous. The weak topology of $\mathcal{R}(T,X)$ is also the coarsest topology for which $J_\varphi$ is lower semicontinuous for every nonnegative normal integrand $\varphi$ whenever $X$ is compact; see \cite[Lemma A.2]{ba84a}. If $T$ is a singleton, then the set $\mathcal{R}(T,X)$ coincides with the set $\Pi(X)$. In this case $\C^1(T\times X,\mu)$ coincides with the space $C_b(X)$ of bounded continuous functions on $X$ and the weak topology of $\mathcal{R}(T,X)$ is the topology of weak convergence of probability measures in $\Pi(X)$.

A sequence $\{ \lambda_i \}$ in $\mathcal{R}(T,X)$ \textit{converges weakly} to $\lambda$ if for every $\varphi\in \C^1(T\times X,\mu)$, we have $\lim_{i}J_\varphi(\lambda_i)=J_\varphi(\lambda)$. A sequence $\{ \lambda_i \}$ in $\mathcal{R}(T,X)$ \textit{converges narrowly} to $\lambda$ if for every $u\in C_b(X)$ and $A\in \Sigma$, we have
$$
\lim_{i\to \infty}\int_A\int_Xu(x)\lambda_i(t,dx)d\mu=\int_A\int_Xu(x)\lambda(t,dx)d\mu.
$$
It follows from the definitions that weak convergence implies narrow convergence in $\mathcal{R}(T,X)$. Furthermore, the converse is also true, i.e., weak and narrow convergence in $\mathcal{R}(T,X)$ are equivalent; see \cite[Theorem 4.10 and Remark 3.6]{ba00}. If $X$ is compact, then $\mathcal{R}(T,X)$ is compact and sequentially compact for the weak topology; see \cite[Lemma A.4]{ba84a}. 

\begin{thm}[density property]
\label{dens}
Let $(T,\Sigma,\mu)$ be a saturated finite measure space, $E$ be a separable Banach space, $C$ be a weakly$^*\!$ closed subset of $E^*$, and $X$ be a compact Polish space. Suppose that the following conditions are satisfied. 
\begin{enumerate}[\rm (i)]
\item $\Phi:T\times X\to E^*$ is an integrably bounded measurable function such that $\Phi(t,\cdot):X\to E^*$ is continuous in the weak$^*\!$ topology of $E^*$ for every $t\in T$;
\item $U:T\twoheadrightarrow X$ is a closed-\hspace{0pt}valued multifunction with $\mathrm{gph}\,U\in \Sigma\otimes \mathrm{Borel}(X)$;
\item $\left\{ \int_T\int_X\Phi(t,x)dPd\mu\mid P\in\Pi(X) \right\}\cap C$ is nonempty. 
\end{enumerate}
Further, define the subset of $\mathcal{R}(T,X)$ by
$$
\K:=\left\{ \lambda\in \mathcal{R}(T,X)\,\Bigl| \begin{array}{l} \int_T\int_X\Phi(t,x)\lambda(t,dx)d\mu\in C \\ \lambda(t)(U(t))=1\ \text{a.e.\ $t\in T$} \end{array} \right\}.
$$
We then have the following equality
$$
\K=\overline{\left\{ \delta_{f(\cdot)}\in \mathcal{R}(T,X)\,\Bigl|
\begin{array}{l}
\int_T \Phi(t,f(t))d\mu\in C,\,f\in \M(T,X) \\
f(t)\in U(t) \text{ a.e.\ $t\in T$}
\end{array}
\right\}}^{\,\mathit{w}}
$$
where $\overline{\{ \cdots \}}^{\,\mathit{w}}$ signifies the closure of the set with respect to the weak topology of $\mathcal{R}(T,X)$. 
\end{thm}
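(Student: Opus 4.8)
The plan is to prove the two inclusions separately; throughout, write $\mathcal{D}$ for the set of Dirac-valued relaxed controls appearing on the right-hand side before the closure is taken. The inclusion $\overline{\mathcal{D}}^{\,\mathit{w}}\subseteq\K$ reduces to checking that $\mathcal{D}\subseteq\K$ and that $\K$ is closed in the weak topology of $\mathcal{R}(T,X)$. The first is immediate: for $f\in\M(T,X)$ with $f(t)\in U(t)$ a.e.\ one has $\int_X\Phi(t,x)\,\delta_{f(t)}(dx)=\Phi(t,f(t))$, hence $\int_T\int_X\Phi(t,x)\,\delta_{f(t)}(dx)\,d\mu=\int_T\Phi(t,f(t))\,d\mu\in C$, and $\delta_{f(t)}(U(t))=1$ a.e. For the weak closedness I would exhibit $\K$ as an intersection of two weakly closed sets. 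Fix a bounded metric on the compact space $X$ and set $d_U(t,x)=\mathrm{dist}(x,U(t))$; since $U$ has closed (hence compact) values in the Polish space $X$, $\mathrm{gph}\,U\in\Sigma\otimes\mathrm{Borel}(X)$, and $\mu$ is complete, $t\mapsto d_U(t,x)$ is measurable for each $x$, $x\mapsto d_U(t,x)$ is Lipschitz, and $d_U$ is bounded, so $d_U\in\C^1(T\times X,\mu)$ and $J_{d_U}$ is weakly continuous. Because $d_U\ge0$ and $\{x:d_U(t,x)=0\}=U(t)$, a relaxed control $\lambda$ is concentrated on $U$ if and only if $J_{d_U}(\lambda)=0$, so that set is weakly closed. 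Next define $I:\mathcal{R}(T,X)\to E^*$ by $I(\lambda)=\int_T\int_X\Phi(t,x)\lambda(t,dx)\,d\mu$, which is well defined in $E^*$ by the integrable boundedness of $\Phi$, exactly as $g_\lambda$ is handled in the proof of Theorem \ref{PP1}. For every $z\in E$ one has $\langle I(\lambda),z\rangle=J_{\varphi_z}(\lambda)$ where $\varphi_z(t,x)=\langle\Phi(t,x),z\rangle\in\C^1(T\times X,\mu)$; hence $I$ is continuous from the weak topology of $\mathcal{R}(T,X)$ to the weak$^*\!$ topology of $E^*$, and, $C$ being weak$^*\!$ closed, $I^{-1}(C)$ is weakly closed. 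Thus $\K=I^{-1}(C)\cap J_{d_U}^{-1}(\{0\})$ is weakly closed, giving $\overline{\mathcal{D}}^{\,\mathit{w}}\subseteq\K$.

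For the reverse inclusion, fix $\lambda_0\in\K$ and an arbitrary basic weak neighborhood $V=\{\lambda\in\mathcal{R}(T,X)\mid|J_{\varphi_j}(\lambda)-J_{\varphi_j}(\lambda_0)|<\varepsilon,\ j=1,\dots,k\}$ of $\lambda_0$, with $\varphi_1,\dots,\varphi_k\in\C^1(T\times X,\mu)$ and $\varepsilon>0$ (I argue with neighborhoods rather than sequences, since the weak topology of $\mathcal{R}(T,X)$ need not be metrizable under saturation). The idea is to apply the purification principle to an ``augmented'' integrand. The space $E\times\R^k$ is a separable Banach space with dual $E^*\times\R^k$, whose weak$^*\!$ topology is the product of the weak$^*\!$ topology of $E^*$ and the norm topology of $\R^k$. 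Define $\Psi:T\times X\to E^*\times\R^k$ by $\Psi(t,x)=(\Phi(t,x),\varphi_1(t,x),\dots,\varphi_k(t,x))$. Then $\Psi$ is integrably bounded, measurable, and $\Psi(t,\cdot)$ is weak$^*\!$ continuous for every $t$; moreover $U$ is compact-valued with measurable graph and $\lambda_0$ is concentrated on $U$. Theorem \ref{PP1}, applied with $E\times\R^k$ in the role of the Banach space, yields $f\in\M(T,X)$ with $f(t)\in U(t)$ a.e.\ and $\int_T\int_X\Psi(t,x)\lambda_0(t,dx)\,d\mu=\int_T\Psi(t,f(t))\,d\mu$. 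Reading this identity componentwise, $\int_T\Phi(t,f(t))\,d\mu=\int_T\int_X\Phi(t,x)\lambda_0(t,dx)\,d\mu\in C$ since $\lambda_0\in\K$, while $J_{\varphi_j}(\delta_{f(\cdot)})=\int_T\varphi_j(t,f(t))\,d\mu=J_{\varphi_j}(\lambda_0)$ for every $j$. Hence $\delta_{f(\cdot)}\in\mathcal{D}\cap V$, so every basic weak neighborhood of $\lambda_0$ meets $\mathcal{D}$; that is, $\lambda_0\in\overline{\mathcal{D}}^{\,\mathit{w}}$.

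The heart of the argument is the observation that no approximation scheme is needed: bundling $\Phi$ together with the finitely many Carath\'eodory integrands that define a prescribed weak neighborhood into a single $E^*\times\R^k$-valued function and invoking Theorem \ref{PP1} once gives \emph{exact} agreement of all the relevant integrals, so the saturation hypothesis enters only through the purification principle. The steps that need some (routine) care are the verifications that $d_U(t,\cdot)=\mathrm{dist}(\cdot,U(t))$ and $\langle\Phi(t,\cdot),z\rangle$ lie in $\C^1(T\times X,\mu)$ — which uses that $X$ is Polish and that $U$ is closed-valued with measurable graph over the complete space $(T,\Sigma,\mu)$ — together with the fact that $I$ takes values in $E^*$ and is continuous for the stated topologies. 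These are exactly the ingredients I expect to require the most attention in a full write-up.
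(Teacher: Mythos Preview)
Your proof is correct, and the heart of it---the inclusion $\K\subseteq\overline{\mathcal{D}}^{\,\mathit{w}}$ via the augmented map $\Psi=(\Phi,\varphi_1,\dots,\varphi_k):T\times X\to E^*\times\R^k$ followed by a single application of Theorem~\ref{PP1}---is exactly the paper's argument, down to the choice of target space and the componentwise reading of the resulting identity.

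Where you differ is in treating the reverse inclusion $\overline{\mathcal{D}}^{\,\mathit{w}}\subseteq\K$ explicitly: the paper's proof establishes only that every neighborhood of every $\lambda_0\in\K$ meets $\mathcal{D}$ and then declares the equality ``as claimed,'' leaving the weak closedness of $\K$ unspoken. Your argument via $J_{d_U}^{-1}(\{0\})$ and the weak-to-weak$^*$ continuity of $I$ fills this in cleanly and is the natural way to do it; the paper presumably regards this direction as routine (and indeed it is, once one observes that $d_U\in\C^1(T\times X,\mu)$ and that $\lambda\mapsto\langle I(\lambda),z\rangle=J_{\langle\Phi(\cdot,\cdot),z\rangle}(\lambda)$ is weakly continuous for each $z\in E$). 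So your write-up is slightly more complete than the paper's, but not a different approach.
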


\begin{proof}
Let $\lambda_0\in \K$ be arbitrary and $\U_0$ be its arbitrary neighborhood. By definition of the weak topology of $\mathcal{R}(T,X)$, there exist $\varphi_1,\dots,\varphi_k$ in $\C^1(T\times X,\mu)$ with $k\in \mathbb{N}$ such that $|J_{\varphi_i}(\lambda)-J_{\varphi_i}(\lambda_0)|<1$, $i=1,\dots,k$ implies $\lambda\in \U_0$. Define $\Psi:T\times X\to E^*\times \R^k$ by $\Psi=(\Phi,\varphi_1,\dots,\varphi_k)$. Then $\Psi$ is an integrably bounded measurable function such that $\Psi(t,\cdot):X\to E^*\times \R^k$ is continuous in the weak$^*\!$ topology of $E^*\times \R^k$ for every $t\in T$. Define the subset $D$ of $E^*\times \R^k$ by 
$$
D=C\times \left\{ \left(\int_T\int_X\varphi_1(t,x)\lambda_0(t,dx)d\mu,\dots,\int_T\int_X\varphi_k(t,x)\lambda_0(t,dx)d\mu \right) \right\}. 
$$
Applying Theorem \ref{PP1} to the pair $(\Psi,U)$ yields the existence of $f\in \M(T,X)$ with $f(t)\in U(t)$ a.e.\ $t\in T$ such that $\int\Psi(t,f(t))d\mu=\iint\Psi(t,x)\lambda_0(t,dx)d\mu$. This implies that $J_{\varphi_i}(\delta_{f(\cdot)})=J_{\varphi_i}(\lambda_0)$ for $i=1,\dots,k$ and $\int\Phi(t,f(t))d\mu=\iint\Phi(t,x)\lambda_0(t,dx)d\mu\in C$. Therefore, $\delta_{f(\cdot)}\in \U_0$. Since the choice of $\lambda_0$ and $\U_0$ is arbitrary, this means the density property is as claimed. 
\end{proof}

\begin{rem}
\label{rem}
The case with $\Phi(t,x)\equiv 0$ and $C\equiv E^*$ means that the constraint in the dual space does not arise in control systems, in which case, the classical Lyapunov convexity theorem is sufficient for the density property and Theorem \ref{dens} is true even if $(T,\Sigma,\mu)$ is not saturated, but it is nonatomic; see \cite[Theorem IV.3.10]{wa72}. Moreover, if $U(t)\equiv X$, then every constraint is unbinding and Theorem \ref{dens} is reduced to the well-known result $\mathcal{R}(T,X)=\overline{\M(T,X)}^{\,\mathit{w}}$; see \cite[Theorem IV.2.6]{wa72}. For the density property with finite-dimensional control systems, see, e.g., \cite[Proposition II.7]{bl73} and \cite[Theorem 7 and Corollary 4]{sb78}.  
\end{rem}

\section{Variational Problems with Gelfand Integral Constraints}
\subsection{The Minimization Principle}
The variational problem under investigation is a general form of the isometric problems, which is an infinite-dimensional analogue of \cite{ap65} with the finite-dimensional setting. The relaxation technique explored here is a Gelfand integral analogue of \cite{ks14b} with the Bochner integral setting. For the existence issue in relaxation and purification in finite-dimensional control systems with integral constraints, see, e.g., \cite{al72,ar98,bl73}.

Suppose that an integrand $\varphi:T\times X\to \R\cup \{+\infty\}$ denotes a cost function and a constraint is described by a measurable function $\Phi:T\times X\to E^*$, a multifunction $U:T\twoheadrightarrow X$ and a given subset $C$ of $E^*$. The variational problem under consideration is
\begin{equation}
\label{vp}
\begin{aligned}
& \min_{f\in \M(T,X)} \int_T\varphi(t,f(t))d\mu \quad  \\
& \text{s.t. $\int_T \Phi(t,f(t))d\mu\in C$ and $f(t)\in U(t)$ a.e.\ $t\in T$}.
\end{aligned}
\tag{\text{VP}}
\end{equation}
Denote by $\min\eqref{vp}$ the minimum value of \eqref{vp} if it exists. The relaxed variational problem corresponding to \eqref{vp} is as follows.
\begin{equation}
\label{rvp}
\begin{aligned}
& \min_{\lambda\in \mathcal{R}(T,X)}\int_T\int_X\varphi(t,x)\lambda(t,dx)d\mu \quad  \\
& \text{s.t. $\int_T\int_X\Phi(t,x)\lambda(t,dx)d\mu\in C$ and $\lambda(t)(U(t))=1$ a.e. $t\in T$}.
\end{aligned}
\tag{\text{RVP}}
\end{equation}
Denote by $\min\eqref{rvp}$ the minimum value of \eqref{rvp} if it exists. Since any $f\in \M(T,X)$ is identified with $\delta_{f(\cdot)}\in \mathcal{R}(T,X)$ such that $\int\varphi(t,x)d(\delta_{f(t)})=\varphi(t,f(t))$ and $\int\Phi(t,x)d(\delta_{f(t)})=\Phi(t,f(t))$ for every $t\in T$, and the transformations $\lambda\mapsto (\int\varphi(t,x)\lambda(t,dx),\int\Phi(t,x)\lambda(t,dx))$ are affine on $\Pi(X)$ in their own right, \eqref{rvp} is a convexified problem to \eqref{vp} with $\min\eqref{vp}\ge \min\eqref{rvp}$ whenever solutions to both problems exist. (If the infimum value of \eqref{vp} happens to be $+\infty$, any feasible solutions to \eqref{vp} and \eqref{rvp} are optimal. Thus, we may innocuously assume that the infimum value of $\eqref{rvp}$ is less than $+\infty$.) 

\begin{lem}
\label{lem3}
Let $(T,\Sigma,\mu)$ be a finite measure space, $E$ be a Banach space, and $X$ be a Polish space. If $\Phi:T\times X\to E^*$ is an integrably bounded measurable function such that $\Phi(t,\cdot):X\to E^*$ is continuous in the weak$^*\!$ topology of $E^*$ for every $t\in T$, then the Gelfand integral functional $I_\Phi:\mathcal{R}(T,X)\to E^*$ defined by 
$$
I_\Phi(\lambda)=\int_T\int_X\Phi(t,x)\lambda(t,dx)d\mu
$$ 
is sequentially continuous in the weak topology of $\mathcal{R}(T,X)$ and the weak$^*\!$ topology of $E^*$.
\end{lem}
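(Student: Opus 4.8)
The plan is to reduce the claim to the defining property of the weak topology on $\mathcal{R}(T,X)$ by evaluating everything against a fixed functional $y\in E$. For such a $y$, set $\varphi_y(t,x):=\langle \Phi(t,x),y\rangle$. First I would check that $\varphi_y\in\C^1(T\times X,\mu)$: for fixed $x$ the map $t\mapsto\langle\Phi(t,x),y\rangle$ is measurable because $\Phi$ is measurable and $E$ is separable, so weak$^*\!$ measurability coincides with $\mathrm{Borel}(E^*,\textit{w}^*)$ measurability; for fixed $t$ the map $x\mapsto\langle\Phi(t,x),y\rangle$ is continuous since $\Phi(t,\cdot)$ is weak$^*\!$ continuous and $y$ is a weak$^*\!$ continuous functional on $E^*$; hence $\varphi_y$ is a Carath\'eodory function, and being Carath\'eodory on the Polish space $X$ it is jointly measurable. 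Moreover $|\varphi_y(t,x)|\le\|y\|\,\varphi(t)$, where $\varphi\in L^1(\mu)$ is the integrable bound of $\Phi$, so $\varphi_y$ is an integrably bounded Carath\'eodory integrand.

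Next I would establish the identity $\langle I_\Phi(\lambda),y\rangle=J_{\varphi_y}(\lambda)$ for every $\lambda\in\mathcal{R}(T,X)$. As in the proof of Theorem \ref{PP1}, for each $t$ the function $\Phi(t,\cdot)$ is bounded and Borel on the Suslin space $X$, hence weak$^*\!$ scalarly integrable with respect to the probability measure $\lambda(t)$, so $g_\lambda(t):=\int_X\Phi(t,x)\lambda(t,dx)$ exists as a Gelfand integral and is characterized by $\langle g_\lambda(t),y\rangle=\int_X\varphi_y(t,x)\lambda(t,dx)$ for all $y\in E$. The map $t\mapsto\langle g_\lambda(t),y\rangle$ is measurable and dominated by $\|y\|\,\varphi(t)$, so $g_\lambda:T\to E^*$ is Gelfand integrable and $I_\Phi(\lambda)=\int_T g_\lambda \,d\mu$; applying the defining property of the Gelfand integral twice yields $\langle I_\Phi(\lambda),y\rangle=\int_T\langle g_\lambda(t),y\rangle d\mu=\int_T\int_X\varphi_y(t,x)\lambda(t,dx)d\mu=J_{\varphi_y}(\lambda)$.

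Finally I would combine the two observations. If $\lambda_i\to\lambda$ in the weak topology of $\mathcal{R}(T,X)$, then, since $\varphi_y\in\C^1(T\times X,\mu)$ and the weak topology was defined precisely so that each such $J_{\varphi_y}$ is continuous, we get $\langle I_\Phi(\lambda_i),y\rangle=J_{\varphi_y}(\lambda_i)\to J_{\varphi_y}(\lambda)=\langle I_\Phi(\lambda),y\rangle$. As $y\in E$ is arbitrary, $I_\Phi(\lambda_i)\to I_\Phi(\lambda)$ in the weak$^*\!$ topology of $E^*$, which is the assertion. (The same argument in fact gives continuity of $I_\Phi$, not merely sequential continuity, because the weak$^*\!$ topology of $E^*$ is the initial topology generated by the evaluation maps at points of $E$.)

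I do not expect a serious obstacle: the lemma is essentially an interchange-of-integration bookkeeping together with the definition of the weak topology on Young measures. The only steps that need a little care are the membership $\varphi_y\in\C^1(T\times X,\mu)$ and the justification of $\langle I_\Phi(\lambda),y\rangle=J_{\varphi_y}(\lambda)$, both of which follow from separability of $E$, the Polish (hence Suslin) structure of $X$, and the integrable boundedness of $\Phi$, exactly along the lines of the proof of Theorem \ref{PP1}.
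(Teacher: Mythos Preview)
Your argument is correct and follows essentially the same route as the paper's proof: fix $y\in E$, recognize that $(t,x)\mapsto\langle\Phi(t,x),y\rangle$ lies in $\C^1(T\times X,\mu)$, and invoke the definition of the weak topology on $\mathcal{R}(T,X)$; you simply spell out in more detail the identity $\langle I_\Phi(\lambda),y\rangle=J_{\varphi_y}(\lambda)$ that the paper compresses into a chain of equalities. One small remark: the lemma does \emph{not} assume $E$ separable, so your appeal to separability when checking measurability of $\varphi_y$ is both unavailable and unnecessary---since $\langle\cdot,y\rangle$ is weak$^*\!$ continuous on $E^*$, it is $\mathrm{Borel}(E^*,\mathit{w}^*)$-measurable, and the joint measurability of $\varphi_y$ follows directly from the assumed measurability of $\Phi$.
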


\begin{proof}
Let $\{ \lambda_i \}$ be a sequence in $\mathcal{R}(T,X)$ converging weakly to $\lambda$. Take $y\in E$ arbitrarily. We then have
\begin{align*}
\lim_{i\to \infty}\left\langle \int_T\int_X\Phi(t,x)\lambda_i(t,dx)d\mu,y \right\rangle
& =\lim_{i\to \infty}\int_T\left\langle \int_X\Phi(t,x)\lambda_i(t,dx),y \right\rangle d\mu \\
& =\lim_{i\to \infty}\int_T\left[ \int_X\langle \Phi(t,x),y \rangle \lambda_i(t,dx) \right]d\mu \\
& =\int_T\left[ \int_X\langle \Phi(t,x),y \rangle \lambda(t,dx) \right]d\mu \\
& =\left\langle \int_T\int_X\Phi(t,x)\lambda(t,dx)d\mu,y \right\rangle,
\end{align*}
where the third equality follows from the fact that the function $(t,x)\mapsto \langle \Phi(t,x),y \rangle$ belongs to $\C^1(T\times X,\mu)$ in view of $\int\langle \Phi(t,x),y \rangle \lambda_i(t,dx)\le \|y\| \psi(t)$ with $\psi\in L^1(\mu)$ for every $i$ and $t\in T$ by the integrable boundedness of $\Phi$ and the definition of the weak convergence in $\mathcal{R}(T,X)$. Therefore, $I_\Phi(\lambda_i)$ converges weakly$^*\!$ to $I_\Phi(\lambda)$ in $E^*$.
\end{proof}

\begin{thm}
\label{exst1}
Let $(T,\Sigma,\mu)$ be a finite measure space, $E$ be a separable Banach space, $C$ be a weakly$^*\!$ closed subset of $E^*$, and $X$ be a compact Polish space. Suppose that the following conditions are satisfied. 
\begin{enumerate}[\rm (i)]
\item $\varphi:T\times X\to \R\cup \{+\infty\}$ is a normal integrand such that there exists $\psi\in L^1(\mu)$ such that $\psi(t)\le \varphi(t,x)$ for every $(t,x)\in T\times X$;
\item $\Phi:T\times X\to E^*$ is an integrably bounded measurable function such that $\Phi(t,\cdot):X\to E^*$ is continuous in the weak$^*\!$ topology of $E^*$ for every $t\in T$; 
\item $U:T\twoheadrightarrow X$ is a closed-\hspace{0pt}valued multifunction with $\mathrm{gph}\,U\in \Sigma\otimes \mathrm{Borel}(X)$;
\item $\left\{ \int_T\int_X\Phi(t,x)dPd\mu\mid P\in\Pi(X) \right\}\cap C$ is nonempty.
\end{enumerate}
Then a solution to \eqref{rvp} exists. 
\end{thm}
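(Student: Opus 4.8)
The plan is to apply the direct method of the calculus of variations in $\mathcal{R}(T,X)$ equipped with its weak topology. Since $X$ is compact, $\mathcal{R}(T,X)$ is compact and sequentially compact for this topology, as recalled just before the statement. Because the weak topology of $\mathcal{R}(T,X)$ need not be metrizable ($(T,\Sigma,\mu)$ is not assumed separable) and because Lemma \ref{lem3} delivers only \emph{sequential} continuity of $I_\Phi$, I would run the entire argument through minimizing sequences. Note that saturation plays no role here: the existence of a relaxed minimizer is a soft compactness fact.

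The first step is the lower semicontinuity of the objective. Writing $\varphi=(\varphi-\psi)+\psi$ with $\psi\in L^1(\mu)$ as in (i), the function $\varphi-\psi$ is a nonnegative normal integrand, so $J_{\varphi-\psi}$ is lower semicontinuous on $\mathcal{R}(T,X)$ for the weak topology by \cite[Lemma A.2]{ba84a}. Since $\iint\psi(t)\lambda(t,dx)d\mu=\int_T\psi\,d\mu$ is a finite constant, $J_\varphi=J_{\varphi-\psi}+\int_T\psi\,d\mu$ is weakly, hence sequentially weakly, lower semicontinuous, takes values in $(-\infty,+\infty]$, and is bounded below by $\int_T\psi\,d\mu$.

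The second step is to show that the feasible set
$$
\K=\{\lambda\in\mathcal{R}(T,X):I_\Phi(\lambda)\in C\ \text{and}\ \lambda(t)(U(t))=1\ \text{a.e.}\ t\in T\}
$$
is nonempty and sequentially weakly closed. Nonemptiness is secured by hypothesis (iv). For the Gelfand-integral constraint, Lemma \ref{lem3} gives that $I_\Phi$ is sequentially continuous from the weak topology of $\mathcal{R}(T,X)$ to the weak$^*\!$ topology of $E^*$, and $C$ is weakly$^*\!$ closed, so $I_\Phi^{-1}(C)$ is sequentially weakly closed. For the constraint $\lambda(t)(U(t))=1$ a.e.\ I would pass to the distance function $\rho(t,x)=\operatorname{dist}(x,U(t))$: since $U$ is closed-valued with $\mathrm{gph}\,U\in\Sigma\otimes\mathrm{Borel}(X)$, $(T,\Sigma,\mu)$ is complete, and $X$ is Polish, $U$ is a measurable multifunction (cf.\ \cite{cv77}), so $\rho(\cdot,x)$ is measurable for each $x$; as $\rho(t,\cdot)$ is Lipschitz and $X$ is compact, $\rho$ is an integrably bounded Carath\'eodory integrand, i.e.\ $\rho\in\C^1(T\times X,\mu)$. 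Because $U(t)$ is closed, $\lambda(t)(U(t))=1$ a.e.\ holds if and only if $J_\rho(\lambda)=\iint\operatorname{dist}(x,U(t))\lambda(t,dx)d\mu=0$, and $J_\rho$ is weakly continuous by the very definition of the weak topology of $\mathcal{R}(T,X)$, so $\{\lambda:J_\rho(\lambda)=0\}$ is weakly closed. Intersecting, $\K$ is sequentially weakly closed.

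The last step is the Weierstrass argument. Let $m=\inf\eqref{rvp}$, which is finite ($m\ge\int_T\psi\,d\mu>-\infty$ and $m<+\infty$ by the standing convention on \eqref{rvp}), and choose a minimizing sequence $\{\lambda_n\}\subset\K$ with $J_\varphi(\lambda_n)\to m$. By sequential compactness, a subsequence $\{\lambda_{n_k}\}$ converges weakly to some $\lambda^*\in\mathcal{R}(T,X)$; by the second step $\lambda^*\in\K$; by the first step $J_\varphi(\lambda^*)\le\liminf_k J_\varphi(\lambda_{n_k})=m$, whence $J_\varphi(\lambda^*)=m$ and $\lambda^*$ solves \eqref{rvp}. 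The only genuinely technical point is the weak closedness of the concentration constraint, and within it the measurability of $t\mapsto\operatorname{dist}(x,U(t))$, which is why I route it through the distance-function reformulation; everything else is the routine lower-semicontinuity-plus-compactness argument. An alternative that avoids the distance function is to invoke directly the standard fact that the set of Young measures concentrated on a closed-valued measurable multifunction is weakly closed, provable via a Castaing representation of $U$.
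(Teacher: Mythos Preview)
Your proof is correct and follows the same direct-method strategy as the paper's own argument: take a minimizing sequence, pass to a weakly convergent subsequence by sequential compactness of $\mathcal{R}(T,X)$, use lower semicontinuity of $J_\varphi$ after reducing to a nonnegative normal integrand, and invoke Lemma~\ref{lem3} for the Gelfand-integral constraint. The only substantive difference is in the concentration constraint $\lambda(t)(U(t))=1$ a.e.: the paper cites \cite[Lemma~4.11 and Theorem~4.15]{ba00} for its preservation under narrow (equivalently weak) limits, whereas you recast it as $J_\rho(\lambda)=0$ for the Carath\'eodory distance integrand $\rho(t,x)=\operatorname{dist}(x,U(t))$ and use continuity of $J_\rho$, which is a clean self-contained alternative.
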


\begin{proof}
Let $\{ \lambda_i \}$ be a minimizing sequence in $\mathcal{R}(T,X)$ for \eqref{rvp}. By the weak compactness of $\mathcal{R}(T,X)$, we can extract a subsequence from $\{ \lambda_i \}$ (which we do not relabel) that converges weakly to some $\lambda\in \mathcal{R}(T,X)$. Since $\{ \lambda_i \}$ converges narrowly to $\lambda\in \mathcal{R}(T,X)$ and each $\lambda_i$ is concentrated on $U$, we conclude that $\lambda$ is concentrated on $U$ as well; see \cite[Lemma 4.11 and Theorem 4.15]{ba00}. It follows from $\iint\Phi(t,x)\lambda_i(t,dx)d\mu\in C$ for each $i$ that $\iint\Phi(t,x)\lambda(t,dx)d\mu\in C$ by Lemma \ref{lem3} and the weak$^*\!$ closedness of $C$. Since $\varphi$ is integrably bounded from below, without loss of generality we may assume for the sake of \eqref{rvp} that $\varphi$ is a nonnegative normal integrand. Since the integral functional $J_\varphi(\nu)=\iint\varphi(t,x)\nu(t,dx)d\mu$ is weakly lower semicontinuous by the definition of the weak topology of $\mathcal{R}(T,X)$, we have $J_\varphi(\lambda)\le \liminf_i J_\varphi(\lambda_i)=\min\text{\eqref{rvp}}$. Hence, $\lambda$ is a solution to \eqref{rvp}. 
\end{proof}

For the existence of solutions to \eqref{rvp}, the saturation assumption on the measure space is unnecessary. Thus, the Lebesgue unit interval, the most fundamental probability space in many applications, is covered in Theorem \ref{exst1}. To ensure that $\min\text{\eqref{vp}}=\min\text{\eqref{rvp}}$ and the existence of solutions to \eqref{vp}, the saturation of the measure space and the continuity property on the normal integrand are sufficient.

\begin{cor}[minimization principle]
\label{exst2}
Let $(T,\Sigma,\mu)$ be a saturated finite measure space, $E$ be a separable Banach space, $C$ be a weakly$^*\!$ closed subset of $E^*$, and $X$ be a compact Polish space. Suppose that the following conditions are satisfied. 
\begin{enumerate}[\rm (i)]
\item $\varphi:T\times X\to \R$ is an integrably bounded Carath\'{e}odory function;
\item $\Phi:T\times X\to E^*$ is an integrably bounded measurable function such that $\Phi(t,\cdot):X\to E^*$ is continuous in the weak$^*\!$ topology of $E^*$ for every $t\in T$; 
\item $U:T\twoheadrightarrow X$ is a closed-\hspace{0pt}valued multifunction with $\mathrm{gph}\,U\in \Sigma\otimes \mathrm{Borel}(X)$;
\item $\left\{ \int_T\int_X\Phi(t,x)dPd\mu\mid P\in\Pi(X) \right\}\cap C$ is nonempty.
\end{enumerate}
Then a solution to \eqref{vp} exists.
\end{cor}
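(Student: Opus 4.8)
The plan is to combine the existence theorem for the relaxed problem (Theorem~\ref{exst1}) with the purification principle (Theorem~\ref{PP1}), the latter applied to the vector-valued map that bundles the cost integrand together with the constraint function.

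First I would verify that \eqref{rvp} has a solution. A Carath\'eodory integrand is a normal integrand, and integrable boundedness of $\varphi$ furnishes $\psi\in L^1(\mu)$ with $-\psi(t)\le\varphi(t,x)$ for all $(t,x)\in T\times X$, so hypothesis~(i) of Theorem~\ref{exst1} holds; hypotheses~(ii)--(iv) are literally those assumed here. Hence Theorem~\ref{exst1} produces $\lambda\in\mathcal{R}(T,X)$ with $\iint\varphi(t,x)\lambda(t,dx)d\mu=\min\eqref{rvp}$, $\iint\Phi(t,x)\lambda(t,dx)d\mu\in C$, and $\lambda(t)(U(t))=1$ a.e.\ $t\in T$.

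Next I would purify $\lambda$. Since $E$ is a separable Banach space, so is $E\times\R$, with dual $E^*\times\R$. Put $\Psi:=(\Phi,\varphi):T\times X\to E^*\times\R$; it is integrably bounded and measurable, and $\Psi(t,\cdot)$ is continuous in the weak$^*$ topology of $E^*\times\R$ because $\Phi(t,\cdot)$ is weak$^*$ continuous and $\varphi(t,\cdot)$ is continuous. As $X$ is compact, the closed-valued multifunction $U$ is compact-valued, with $\mathrm{gph}\,U\in\Sigma\otimes\mathrm{Borel}(X)$. Applying Theorem~\ref{PP1} to the pair $(\Psi,U)$ and to $\lambda$ (which is concentrated on $U$) yields $f\in\M(T,X)$ with $f(t)\in U(t)$ a.e.\ and $\int_T\Psi(t,f(t))d\mu=\iint\Psi(t,x)\lambda(t,dx)d\mu$. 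Comparing coordinates gives $\int_T\Phi(t,f(t))d\mu=\iint\Phi(t,x)\lambda(t,dx)d\mu\in C$ and $\int_T\varphi(t,f(t))d\mu=\min\eqref{rvp}$.

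Finally I would close the optimality gap. The first identity together with $f(t)\in U(t)$ a.e.\ shows $f$ is feasible for \eqref{vp}. For any feasible $g\in\M(T,X)$ for \eqref{vp}, the Dirac control $\delta_{g(\cdot)}$ is feasible for \eqref{rvp} with the same objective value, whence $\int_T\varphi(t,g(t))d\mu\ge\min\eqref{rvp}$; taking the infimum over $g$ gives $\inf\eqref{vp}\ge\min\eqref{rvp}$. Since $f$ is feasible with value exactly $\min\eqref{rvp}$, it attains $\inf\eqref{vp}$, so $f$ solves \eqref{vp} (and $\min\eqref{vp}=\min\eqref{rvp}$). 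The hard part will be the second step: one must check carefully that Theorem~\ref{PP1} really applies to the bundled map $\Psi$ taking values in the dual of the enlarged separable space $E\times\R$ (joint measurability and weak$^*$ continuity of the sections), and note the harmless point that closedness of $U(t)$ together with compactness of $X$ yields the compact-valuedness required there; the rest is bookkeeping around the inequality $\min\eqref{vp}\ge\min\eqref{rvp}$ already recorded before the statement.
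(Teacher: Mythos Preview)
Your proposal is correct and follows essentially the same route as the paper: bundle $\varphi$ with $\Phi$ into $\Psi=(\Phi,\varphi):T\times X\to E^*\times\R$, apply the purification principle (Theorem~\ref{PP1}) to $\Psi$ and $U$ against a solution $\lambda$ of \eqref{rvp} obtained from Theorem~\ref{exst1}, and read off the two coordinate identities. You are in fact a bit more careful than the paper in spelling out that $E\times\R$ is separable with dual $E^*\times\R$, that closedness of $U(t)$ in the compact $X$ gives the compact-valuedness Theorem~\ref{PP1} requires, and in making the inequality $\min\eqref{vp}\ge\min\eqref{rvp}$ explicit; the paper simply asserts these steps.
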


\begin{proof}
Let $\lambda\in \mathcal{R}(T,X)$ be a solution to \eqref{rvp}. Define the function $\Psi:T\times X\to E^*\times \R$ by $\Psi=(\Phi,\varphi)$ and the weakly$^*\!$ closed set $D\subset E^*\times \R$ by $D=C\times \{ \iint\varphi(t,x)\lambda(t,dx)d\mu \}$. Applying Theorem \ref{PP1} to the pair $(\Psi,U)$ yields the existence of $f\in \M(T,X)$ with $f(t)\in U(t)$ a.e.\ $t\in T$ satisfying $\iint\Psi(t,x)\lambda(t,dx)d\mu=\int\Psi(t,f(t))d\mu$. This means that $\int\varphi(t,f(t))d\mu=\iint\varphi(t,x)\lambda(t,dx)d\mu$ and $\int\Phi(t,f(t))d\mu=\iint\Phi(t,x)\lambda(t,dx)d\mu\in C$. Therefore, $\min\eqref{vp}=\min\eqref{rvp}$ and $f$ is a solution to \eqref{vp}. 
\end{proof}

We say that a quartet $(\varphi,\Phi,U,C)$ fulfills the \textit{minimization principle} (MP) if \eqref{vp} corresponding to $(\varphi,\Phi,U,C)$ has a solution. The converse of Corollary \ref{exst2} is framed in the following form. 

\begin{thm}
Let $(T,\Sigma,\mu)$ be a nonatomic finite measure space, $E$ be an infinite-\hspace{0pt}dimensional separable Banach space, $C$ be a weakly$^*\!$ closed subset of $E^*$, and $X$ be an uncountable compact Polish space. If every quartet $(\varphi,\Phi,U,C)$ satisfying conditions {\rm (i)} to {\rm (iv)} of Corollary \ref{exst2} fulfills {\rm (MP)}, then $(T,\Sigma,\mu)$ is saturated. 
\end{thm}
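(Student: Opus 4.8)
The plan is to argue by contraposition: assume $(T,\Sigma,\mu)$ is nonatomic but \emph{not} saturated, and exhibit a quartet $(\varphi,\Phi,U,C)$ that satisfies conditions (i)--(iv) of Corollary \ref{exst2} yet for which \eqref{vp} has no solution (indeed, with empty feasible set). This contradicts the hypothesis that every such quartet fulfills (MP), and hence forces $(T,\Sigma,\mu)$ to be saturated. The construction transplants the failure of the Lyapunov convexity theorem in the dual $E^*$ into a control problem via an encoding over a totally disconnected parameter space.

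First I would invoke, exactly as in the proof of Theorem \ref{nec1} (via \cite{po08,sy08}), that non-saturation yields a Bochner- (hence Gelfand-) integrable $f_0\in G^1(\mu,E^*)$ with values in a separable subspace of $E^*$ whose indefinite-integral range $R_{f_0}=\{\int_A f_0\,d\mu : A\in\Sigma\}$ is not convex. Since non-convexity of $R_{f_0}$ means $\mathrm{co}(R_{f_0})\neq R_{f_0}$, I may pick $A_1,\dots,A_k\in\Sigma$ and weights $t_1,\dots,t_k>0$, $\sum_i t_i=1$, with $z_0:=\sum_i t_i\int_{A_i}f_0\,d\mu\notin R_{f_0}$; setting $g_0:=\sum_i t_i\chi_{A_i}\in L^\infty(\mu)$ gives $0\le g_0\le 1$ and $z_0=\int_T g_0 f_0\,d\mu$. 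Now let $X$ be the Cantor set (an uncountable compact Polish space) and let $\pi:X\to\{0,1\}$ be a coordinate projection, a continuous surjection; define $\Phi(t,x):=\pi(x)f_0(t)$, and put $U(t)\equiv X$, $C:=\{z_0\}$, $\varphi\equiv 0$. Then $\varphi$ is an integrably bounded Carath\'eodory function; $\Phi$ is jointly measurable, integrably bounded by $\|f_0(\cdot)\|\in L^1(\mu)$, and $\Phi(t,\cdot)$ is norm- (hence weak$^*\!$-) continuous; $U$ is closed-valued with $\mathrm{gph}\,U=T\times X\in\Sigma\otimes\mathrm{Borel}(X)$; and $C$ is weak$^*\!$-closed. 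So (i)--(iii) hold. For $f\in\M(T,X)$ one has $\pi\circ f=\chi_{A_f}$ with $A_f:=f^{-1}(\pi^{-1}(1))\in\Sigma$, and every $\chi_A$ arises this way (pick $x^0,x^1\in X$ with $\pi(x^i)=i$ and set $f=x^0\chi_{T\setminus A}+x^1\chi_A$); hence the pure-control reachable set is $\{\int_T\Phi(t,f(t))\,d\mu : f\in\M(T,X)\}=R_{f_0}\not\ni z_0$, so \eqref{vp} has empty feasible set and no solution. Finally, the relaxed control $\lambda(t):=(1-g_0(t))\delta_{x^0}+g_0(t)\delta_{x^1}$ is concentrated on $U\equiv X$ and, computing the inner Gelfand integral componentwise, satisfies $\int_T\int_X\Phi(t,x)\lambda(t,dx)\,d\mu=\int_T g_0 f_0\,d\mu=z_0\in C$; so \eqref{rvp} is feasible, which is exactly what condition (iv) is used for in the proof of Theorem \ref{exst1}. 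Thus (i)--(iv) all hold, and the contradiction follows.

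The one place that needs care is condition (iv). I have verified it in the form actually used downstream, namely that \eqref{rvp} is feasible (witnessed by $\lambda$ above, equivalently by $z_0\in\{\int_T g f_0\,d\mu : 0\le g\le1\}$, which contains $\mathrm{co}(R_{f_0})$). If one insists on reading (iv) literally as the existence of a \emph{constant} $P\in\Pi(X)$ with $\int_T\int_X\Phi(t,x)\,dP\,d\mu\in C$, then, because $\Phi(t,x)=\pi(x)f_0(t)$ confines the constant-control reachable set to the segment $\{p\int_T f_0\,d\mu : p\in[0,1]\}$, one must choose $z_0$ on that segment; this requires the sharper form of the Lyapunov counterexample in which the chord from $0$ to $\int_T f_0\,d\mu$ meets $R_{f_0}$ only at its endpoints. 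That refinement is realizable — e.g.\ by taking $f_0$ to be the primitive $t\mapsto\int_0^t\psi$ of a vector-valued function $\psi=\sum_n h_n(\cdot)e_n$ built from an $L^2$-orthonormal system $(h_n)$ and a fast-decaying basic sequence $(e_n)$ in $E^*$ (so the associated integral operator is injective), transported onto an essentially-countably-generated nonatomic piece of $(T,\Sigma,\mu)$ isomorphic to the Lebesgue interval — and this is where the genuine technical work lies. (A morally cleaner but interpretively identical alternative is to feed the bad pair $(\Phi_\star,\lambda_\star)$ of Theorem \ref{nec2} directly into the feasibility-only quartet $(\varphi\equiv0,\ \Phi_\star,\ X,\ \{I_{\Phi_\star}(\lambda_\star)\})$.)
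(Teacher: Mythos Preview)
Your route is genuinely different from the paper's. The paper takes $U\equiv X$ and $C\equiv E^*$ (so the Gelfand-integral constraint is vacuous and condition (iv) is trivially met), places the obstruction in the \emph{cost} $\varphi$ via the failure of purification from Theorem~\ref{nec2}, and then uses the density $\mathcal{R}(T,X)=\overline{\M(T,X)}^{\,\mathit{w}}$ (valid under mere nonatomicity) together with the weak continuity of $J_\varphi$ to produce a relaxation gap $\min\eqref{rvp}<\min\eqref{vp}$ and hence a contradiction with the assumed existence of $\min\eqref{vp}$. You instead set $\varphi\equiv 0$, put the obstruction entirely into the \emph{constraint} pair $(\Phi,C)$ with $C$ a singleton, and argue that \eqref{vp} is outright infeasible. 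Your approach is conceptually cleaner---infeasibility is a starker failure than non-attainment---and ties the result visibly to the Lyapunov counterexample; the paper's approach, on the other hand, makes condition (iv) automatic by the choice $C=E^*$ and works uniformly for every uncountable compact Polish $X$ without further construction.

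There are two points where your argument is incomplete. First, $X$ is fixed in the hypothesis, not yours to choose; for connected $X$ (e.g., $X=[0,1]$) no continuous surjection $\pi:X\to\{0,1\}$ exists, so your encoding via $\pi$ breaks down. The fix is minor: pick distinct $x^0,x^1\in X$, set $U(t)\equiv\{x^0,x^1\}$, and take $\Phi(t,x)=\rho(x)f_0(t)$ with $\rho(x)=d(x,x^0)/(d(x,x^0)+d(x,x^1))$; pure controls satisfying $f(t)\in U(t)$ then hit exactly $R_{f_0}$. Second---and this is the real gap---condition (iv) must be verified \emph{as written}: a quartet failing the literal (iv) is simply not covered by the theorem's hypothesis, so your ``downstream reinterpretation'' of (iv) as feasibility of \eqref{rvp} does not suffice. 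With your $\Phi$, the constant-$P$ reachable set is the segment from $0$ to $\int_T f_0\,d\mu$, so you need an $f_0$ with nonconvex $R_{f_0}$ for which some point of that segment lies outside $R_{f_0}$. This \emph{can} be arranged (your sketch---an injective integral operator planted on an essentially countably generated nonatomic piece---is the right idea), but it is genuine additional work that the paper's choice $C=E^*$ entirely sidesteps.
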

 
\begin{proof}
Suppose that the nonatomic finite measure space $(T,\Sigma,\mu)$ is not saturated. By Theorem \ref{nec2}, letting $U(t)\equiv X$ and $C\equiv E^*$ guarantees that there exists a quartet $(\varphi,\Phi,U,C)$ satisfying conditions (i) and (iv) of Corollary \ref{exst2} such that for some $\lambda\in \mathcal{R}(T,X)$ no $f\in \M(T,X)$ satisfies \eqref{pp1} and $J_\varphi(\lambda)=\int\varphi(t,f(t))d\mu$. Hence, the variational problem corresponding to $(\varphi,\Phi,U,C)$ yields the ``relaxation gap'': $\min\eqref{rvp}<\min\eqref{vp}$. Since $\mathcal{R}(T,X)=\overline{\M(T,X)}^{\,\mathit{w}}$ in view of the nonatomicity hypothesis (see Remark \ref{rem}) and $J_\varphi$ is weakly lower semicontinuous on $\mathcal{R}(T,X)$, there exists a minimizing sequence $\{ f_n \}$ in $\M(T,X)$ such that $J_\varphi(\delta_{f_n(\cdot)})\to \min\text{\eqref{rvp}}$. This means that $J_\varphi(\delta_{f_n(\cdot)})=\int\varphi(t,f_n(t))d\mu<\min\eqref{vp}$ for every sufficiently large $n$, an obvious contradiction. 
\end{proof}

The proof of Corollary \ref{exst2} is based on the ``direct method'' of the calculus of variations via the relaxation technique. For the existence result without the relaxation technique, based on the ``indirect method'' exploiting the duality theory in Asplund spaces in the nonsmooth setting, see \cite{sa15}. As investigated thoroughly in \cite{mo06}, Asplund spaces are suitable places for exploring subdifferential calculus fully. Another relevant application of saturation to subdifferential calculus in Asplund spaces for integral functionals is found in \cite{ms18}.

\section{Relaxation of Large Economies}
\subsection{Existence of Pareto Optimal Allocations}
Large economies introduced in \cite{au64,au66} model the set of agents as a nonatomic finite measure space in the finite-\hspace{0pt}dimensional framework to show the existence of Walrasian equilibria and the core-Walras equivalence without any convexity hypothesis; see also \cite{hi74} for detailed references to follow-up work till 1974. We apply the relaxation technique to exchange economies with an infinite-dimensional commodity space and make use of the purification principle to show the existence of Pareto optimal allocations for the original economy.  

The set of agents is given by a (complete) finite measure space $(T,\Sigma,\mu)$. The commodity space is given by the dual space $E^*$ of a separable Banach space $E$. The preference relation ${\succsim}(t)$ of each agent $t\in T$ is a complete, transitive binary relation on a common consumption set $X\subset E^*$, which induces the preference map $t\mapsto {\succsim}(t)\subset X\times X$. We denote by $x\,{\succsim}(t)\,y$ the relation $(x,y)\in {\succsim}(t)$. The indifference and strict relations are defined respectively by $x\,{\sim}(t)\,y$ $\Leftrightarrow$ $x\,{\succsim}(t)\,y$ and $y\,{\succsim}(t)\,x$, and by $x\,{\succ}(t)\,y$ $\Leftrightarrow$ $x\,{\succsim}(t)\,y$ and $x\,{\not\sim}(t)\,y$. Each agent possesses an initial endowment $\omega(t)\in X$, which is the value of a Gelfand integrable function $\omega:T\to E^*$. The economy $\E$ consists of the primitive $\E=\{ (T,\Sigma,\mu),X,\succsim,\omega \}$. 

The standing assumption on $\E$ is described as follows. 

\begin{assmp}
\label{assmp}
\begin{enumerate}[(i)]
\item $X$ is a weakly$^*\!$ compact subset of $E^*$. 
\item ${\succsim}(t)$ is a weakly$^*\!$ closed subset of $X\times X$ for every $t\in T$. 
\item For every $x,y\in X$ the set $\{ t\in T\mid x\,{\succ}(t)\,y \}$ is in $\Sigma$.  
\end{enumerate}
\end{assmp}

The preference relation ${\succsim}(t)$ is said to be \textit{continuous} if it satisfies Assumption \ref{assmp}(ii). Since $E$ is separable, the weakly$^*\!$ compact set $X\subset E^*$ is metrizable for the weak$^*\!$ topology (see \cite[Corollary 2.6.20]{me98}), and hence, the common consumption set $X$ is a compact Polish space. It follows from \cite[Proposition 1]{au69} that there exists a Carath\'eodory function $\varphi:T\times X\to \R$ such that  
\begin{equation}
\label{rp1}
\forall x,y\in X\ \forall t\in T: x\,{\succsim}(t)\,y \Longleftrightarrow \varphi(t,x)\ge \varphi(t,y). 
\end{equation}
(While \cite{au69} treated the case where $X$ is the nonnegative orthant of a finite-dimensional Euclidean space, the  proof is obviously valid as it stands for the case where $X$ is a separable metric space.) Moreover, this representation in terms of Carath\'eodory functions is unique up to strictly increasing, continuous transformations in the following sense: If $F:T\times \R\to \R$ is a function such that $t\mapsto F(t,r)$ is measurable and $r\mapsto F(t,r)$ is strictly increasing and continuous, then $x\,{\succsim}(t)\,y \Leftrightarrow F(t,\varphi(t,x))\ge F(t,\varphi(t,y))$, where $(t,x)\mapsto F(t,\varphi(t,x))$ is a Carath\'eodory function. In the sequel, we may assume without loss of generality that the preference map $t\mapsto {\succsim}(t)$ is represented by a Carath\'eodory function $\varphi$ that is unique up to strictly increasing, continuous transformations. 

Given a continuous preference ${\succsim}(t)$ on $X$, its continuous affine extension ${\succsim}_\mathcal{R}(t)$ to $\Pi(X)$ is obtained by convexifying (randomizing) the individual utility function $\varphi(t,\cdot)$ in such a way  
\begin{equation}
\label{rp2}
\forall P,Q\in \Pi(X)\ \forall t\in T: P\,{\succsim}_\mathcal{R}(t)\,Q \stackrel{\text{def}}{\Longleftrightarrow}  \int_X \varphi(t,x)dP\ge \int_X \varphi(t,x)dQ.
\end{equation}
The continuous extension ${\succsim}_{\mathcal{R}}(t)$ of ${\succsim}(t)$ from $X$ to the \textit{relaxed consumption set} $\Pi(X)$ is called a \textit{relaxed preference relation} on $\Pi(X)$. Thus, the restriction of ${\succsim_\mathcal{R}}(t)$ to $\Delta(X)$ coincides with ${\succsim}(t)$ on $X$. Indifference relation ${\sim}_\mathcal{R}(t)$ and strict relation ${\succ}_\mathcal{R}(t)$ are defined in a way analogous to the above. The extension formula in \eqref{rp2} conforms to the relaxation technique investigated in Section 3. As observed in \cite{ks16b}, relaxed preferences are also consistent with the axioms for the ``expected utility hypothesis'' and the continuous function $\varphi(t,\cdot)$ corresponds to the ``von Neumann--Morgenstern utility function'' for ${\succsim}_\mathcal{R}(t)$. 

Denote by $\E_\mathcal{R}=\{ (T,\Sigma,\mu),\Pi(X),{\succsim}_\mathcal{R},\delta_{\omega(\cdot)} \}$ the  \textit{relaxed economy} induced by the original economy $\E=\{ (T,\Sigma,\mu),X,\succsim,\omega \}$, where the initial endowment $\omega(t)\in X$ of each agent is identified with a Dirac measure $\delta_{\omega(t)}\in \Delta(X)$, and hence, $\delta_{\omega(\cdot)}\in \mathcal{R}(T,X)$. Let $\imath_X$ be the identity map on $X$. We denote by $\int\imath_XdP$ the Gelfand integral of $\imath_X$ with respect to the probability measure $P\in \Pi(X)$.  

To deal with Pareto optimality with or without free disposal simultaneously, following \cite[Chapter 8]{mo06}, we introduce ``market constraints'' for the definition of (relaxed) allocations.  

\begin{dfn}
Let $W$ be a nonempty subset of $E^*$. 
\begin{enumerate}[(i)]
\item A Gelfand integrable function $f\in G^1(\mu,E^*)$ is an \textit{allocation} for $\E$ if it satisfies:
$$
\int_Tf(t)d\mu-\int_T\omega(t)d\mu\in W \quad\text{and $f(t)\in X$ a.e. $t\in T$}.
$$
\item A relaxed control $\lambda\in \mathcal{R}(T,X)$ is a \textit{relaxed allocation} for $\E_\mathcal{R}$ if it satisfies: 
$$
\int_T\int_X\imath_X(x)\lambda(t,dx)d\mu-\int_T\omega(t)d\mu\in W. 
$$   
\end{enumerate}
\end{dfn}
\noindent
In particular, when $W=\{ 0 \}$, the definition reduces to the (relaxed) allocations ``without'' \textit{free disposal}; when $-W$ is a convex cone and $E$ is endowed with the cone order $\le$ defined by $x\le y \Leftrightarrow y-x\in -W$, the definition reduces to the (relaxed) allocations ``with'' free disposal. Denote by $\A(\E)$ the set of allocations for $\E$ and by $\A(\E_\mathcal{R})$ the set of relaxed allocations for $\E_\mathcal{R}$. If $\lambda$ is a relaxed allocation for $\E_\mathcal{R}$ such that $\lambda(t)=\delta_{f(t)}\in \Delta(X)$ for every $t\in T$ and $f\in G^1(\mu,E^*)$, then it reduces to the usual feasibility constraint $\int fd\mu-\int \omega d\mu\in W$ for $\E$. This means that $\A(\E)\subset \A(\E_\mathcal{R})$. 

An immediate consequence of Theorem \ref{dens} leads to the density property of the set of allocations.

\begin{thm}
Let $(T,\Sigma,\mu)$ be a saturated finite measure space, $E$ be a separable Banach space, $X$ be a weakly$^*\!$ compact subset of $E^*$, and $W$ be a weakly$^*\!$ closed subset of $E^*$. Then $\A(\E_\mathcal{R})=\overline{\A(\E)}^{\mathit{\,w}}$.
\end{thm}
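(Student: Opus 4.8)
The plan is to derive the statement as an immediate specialization of Theorem~\ref{dens}. Concretely, I would apply that theorem with $\Phi:=\imath_X$, regarded as the function $(t,x)\mapsto x$ from $T\times X$ into $E^*$, with $U(t)\equiv X$, and with
$$
C:=W+\int_T\omega(t)\,d\mu=\Bigl\{\, w+\int_T\omega(t)\,d\mu\ \Big|\ w\in W \,\Bigr\}.
$$
First I would verify hypotheses (i)--(iii) of Theorem~\ref{dens}. As recorded after Assumption~\ref{assmp}, separability of $E$ makes the weakly$^*\!$ compact set $X$ a compact Polish space in its relative weak$^*\!$ topology; moreover $X$ is norm bounded by the uniform boundedness principle, so $\imath_X$ is integrably bounded, it is $\Sigma\otimes\mathrm{Borel}(X)$-measurable since it depends on $x$ alone and each $x\mapsto\langle x,y \rangle$ is continuous, and $\Phi(t,\cdot)=\imath_X$ is weakly$^*\!$ continuous; this yields (i). Condition (ii) is immediate because $U$ is constant with compact, hence weakly$^*\!$ closed, value and $\mathrm{gph}\,U=T\times X\in\Sigma\otimes\mathrm{Borel}(X)$, and $C$ is weakly$^*\!$ closed as a translate of the weakly$^*\!$ closed set $W$ by the fixed vector $\int_T\omega\,d\mu$.

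The only delicate point is hypothesis (iii), i.e.\ that $\{\int_T\int_X\imath_X(x)\,dP\,d\mu \mid P\in\Pi(X)\}$ meets $C$. Since $\int_X\imath_X\,dP$ is independent of $t$, this set is $\{\mu(T)\int_X x\,dP\mid P\in\Pi(X)\}$, so (iii) says that $\mu(T)\int_X x\,dP-\int_T\omega\,d\mu\in W$ for some $P$; taking the constant relaxed control $\lambda\equiv P$ shows this is equivalent to $\A(\E_\mathcal{R})\ne\emptyset$. I would therefore split into cases. If $\A(\E_\mathcal{R})=\emptyset$, then $\A(\E)=\emptyset$ as well since $\A(\E)\subset\A(\E_\mathcal{R})$, and the claimed equality reduces to $\emptyset=\emptyset$. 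Otherwise (so $\mu(T)>0$) I would take $\lambda_0\in\A(\E_\mathcal{R})$ and form its barycentric measure $P_0\in\Pi(X)$ given by $P_0(A)=\mu(T)^{-1}\int_T\lambda_0(t)(A)\,d\mu$; pairing with an arbitrary $y\in E$ shows $\mu(T)\int_X x\,dP_0=\int_T\int_X\imath_X(x)\lambda_0(t,dx)\,d\mu$, which lies in $C$ because $\lambda_0$ is a relaxed allocation, so (iii) holds.

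With the hypotheses in force, Theorem~\ref{dens} yields
$$
\K=\overline{\Bigl\{\,\delta_{f(\cdot)}\ \Big|\ \int_T\Phi(t,f(t))\,d\mu\in C,\ f\in\M(T,X),\ f(t)\in U(t)\ \text{a.e.}\,\Bigr\}}^{\,\mathit{w}},
$$
and it remains to translate both sides into allocation language. On the left, $\lambda(t)(U(t))=\lambda(t)(X)=1$ holds automatically for $\lambda\in\mathcal{R}(T,X)$, while $\int_T\int_X\Phi(t,x)\lambda(t,dx)\,d\mu\in C$ is precisely $\int_T\int_X\imath_X(x)\lambda(t,dx)\,d\mu-\int_T\omega(t)\,d\mu\in W$, so $\K=\A(\E_\mathcal{R})$. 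On the right, for $f\in\M(T,X)$ the conditions $f(t)\in U(t)=X$ a.e.\ and $\int_T\Phi(t,f(t))\,d\mu=\int_T f(t)\,d\mu\in C$ say exactly that $f$ is an allocation for $\E$, so, under the identification $f\leftrightarrow\delta_{f(\cdot)}$ used in the text preceding the statement, the set inside the closure is $\{\delta_{f(\cdot)}\mid f\in\A(\E)\}$. Hence $\A(\E_\mathcal{R})=\overline{\A(\E)}^{\,\mathit{w}}$, as claimed. I expect the main obstacle to be purely organizational: correctly identifying Theorem~\ref{dens}'s feasibility hypothesis (iii) with the nonemptiness of $\A(\E_\mathcal{R})$ --- for which the barycenter is the key device --- and disposing of the trivial empty case.
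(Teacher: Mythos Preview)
Your proposal is correct and follows exactly the paper's approach: the paper's proof consists of the single sentence ``Simply apply Theorem~\ref{dens} to the case with $\Phi(t,x)\equiv \imath_X(x)$, $U(t)\equiv X$, and $C=\int\omega d\mu+W$,'' which is precisely your specialization. Your write-up is in fact more careful than the paper's, since you explicitly verify hypotheses (i)--(iii), use the barycentric measure to reduce (iii) to nonemptiness of $\A(\E_\mathcal{R})$, and dispose of the degenerate empty case, all of which the paper leaves implicit.
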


\begin{proof}
Simply apply Theorem \ref{dens} to the case with $\Phi(t,x)\equiv \imath_X(x)$, $U(t)\equiv X$, and $C=\int\omega d\mu+W$. 
\end{proof}

\begin{dfn}
\begin{enumerate}[(i)]
\item An allocation $f\in \A(\E)$ is \textit{Pareto optimal} for $\E$ if there exist no $g\in \A(\E)$ and $A\in \Sigma$ of positive measure such that $g(t)\,{\succsim}(t)\,f(t)$ a.e.\ $t\in T$ and $g(t)\,{\succ}(t)\,f(t)$ for every $t\in A$.
\item A relaxed allocation $\lambda\in \A(\E_{\mathcal{R}})$ is \textit{Pareto optimal} for $\E_\mathcal{R}$ if there exist no $\nu\in \mathcal{R}(T,X)$ and $A\in \Sigma$ of positive measure such that $\nu(t)\,{\succsim}_\mathcal{R}(t)\,\lambda(t)$ a.e.\ $t\in T$ and $\nu(t)\,{\succ}_\mathcal{R}(t)\,\lambda(t)$ for every $t\in A$.
\end{enumerate}
\end{dfn}
\noindent
Denote by $\P(\E)$ the set of Pareto optimal allocations for $\E$ and by $\P(\E_\mathcal{R})$  the set of Pareto optimal relaxed allocations for $\E_\mathcal{R}$. 

\begin{thm}
\label{exst3}
Let $(T,\Sigma,\mu)$ be a finite measure space, $E$ be a separable Banach space, and $W$ be a weakly$^*\!$ closed subset of $E^*$. Then $\P(\E_\mathcal{R})$ is nonempty for every economy $\E$ satisfying Assumption \ref{assmp}. 
\end{thm}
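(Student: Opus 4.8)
The plan is to establish the nonemptiness of $\P(\E_\mathcal{R})$ by a scalarization/welfare-weight argument combined with an application of the existence result for relaxed variational problems, Theorem \ref{exst1}. First I would fix a Carath\'eodory utility representation $\varphi:T\times X\to \R$ for the preference map $t\mapsto {\succsim}(t)$ via \eqref{rp1}, so that the relaxed preferences in \eqref{rp2} are represented by $P\mapsto \int_X\varphi(t,x)dP$. Since $X$ is weakly$^*\!$ compact (Assumption \ref{assmp}(i)) and $E$ is separable, $X$ is a compact Polish space; moreover $\varphi$ is integrably bounded because a continuous function on a compact metric space is bounded and the bound can be taken measurable in $t$ (indeed, after a strictly increasing continuous renormalization one may assume $\varphi(t,\cdot)$ takes values in $[0,1]$, say). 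This puts us in the setting where all the machinery of Section 4 applies.

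Next I would reformulate Pareto optimality as the solution of a single scalar relaxed variational problem. The standard trick: consider the integral functional $\lambda\mapsto \int_T\int_X\varphi(t,x)\lambda(t,dx)d\mu = J_\varphi(\lambda)$ over the constraint set $\A(\E_\mathcal{R})$, i.e.\ solve \eqref{rvp} with cost $-\varphi$ (to maximize aggregate utility), $\Phi(t,x)=\imath_X(x)$, $U(t)\equiv X$, $C=\int_T\omega\,d\mu + W$. Condition (iv) of Theorem \ref{exst1} is met because $\delta_{\omega(\cdot)}\in \A(\E_\mathcal{R})$, so the constraint set is nonempty and the relevant intersection contains $\int_T\omega\,d\mu$. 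By Theorem \ref{exst1} a maximizer $\lambda^*\in \mathcal{R}(T,X)$ exists. I would then argue that such a $\lambda^*$ is Pareto optimal for $\E_\mathcal{R}$: if some $\nu\in \A(\E_\mathcal{R})$ dominated $\lambda^*$ in the sense of the definition, then $\int_X\varphi(t,x)\nu(t,dx)\ge \int_X\varphi(t,x)\lambda^*(t,dx)$ a.e.\ with strict inequality on a set of positive measure, whence $J_\varphi(\nu) > J_\varphi(\lambda^*)$, contradicting maximality — provided $\nu$ is feasible, which it is since domination is only required among relaxed allocations.

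The main obstacle I anticipate is a measurability/selection subtlety rather than a deep one: one must ensure that a \emph{dominating} $\nu$ can without loss of generality be taken to be itself a relaxed allocation (so that the contradiction with maximality of $\lambda^*$ over $\A(\E_\mathcal{R})$ is legitimate), and conversely that the maximizer's feasibility constraint $\int_T\int_X\imath_X(x)\lambda^*(t,dx)d\mu - \int_T\omega\,d\mu\in W$ is exactly the market constraint defining $\A(\E_\mathcal{R})$. Both are immediate from the definitions, but I would spell out that the Pareto-domination quantifier in the definition of $\P(\E_\mathcal{R})$ already ranges over $\nu\in \mathcal{R}(T,X)$ with $\nu\in \A(\E_\mathcal{R})$ implicitly (feasibility is part of being a competing allocation). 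A second minor point: I should confirm that $C = \int_T\omega\,d\mu + W$ is weakly$^*\!$ closed, which holds since $W$ is weakly$^*\!$ closed and translation by a fixed vector is a weak$^*\!$ homeomorphism. No saturation hypothesis is needed here — consistent with the statement — because Theorem \ref{exst1} itself does not require saturation; saturation enters only later when passing from $\E_\mathcal{R}$ back to $\E$ via the purification principle.

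\begin{proof}
Fix a Carath\'eodory representation $\varphi:T\times X\to\R$ of the preference map $t\mapsto{\succsim}(t)$ as in \eqref{rp1}. By Assumption \ref{assmp}(i), $X$ is weakly$^*\!$ compact, and since $E$ is separable, $X$ is a compact Polish space; replacing $\varphi$ by a strictly increasing continuous transformation (which, by the uniqueness discussion following \eqref{rp1}, preserves the representation and leaves $\varphi$ a Carath\'eodory function), we may assume $0\le\varphi(t,x)\le 1$ for every $(t,x)\in T\times X$, so in particular $\varphi$ is integrably bounded. By \eqref{rp2}, for $\lambda,\nu\in\mathcal{R}(T,X)$ we have $\nu(t)\,{\succsim}_\mathcal{R}(t)\,\lambda(t)$ iff $\int_X\varphi(t,x)\nu(t,dx)\ge\int_X\varphi(t,x)\lambda(t,dx)$, and $\nu(t)\,{\succ}_\mathcal{R}(t)\,\lambda(t)$ iff this inequality is strict.

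Consider the relaxed variational problem \eqref{rvp} with cost integrand $-\varphi$, constraint function $\Phi(t,x)=\imath_X(x)$, control multifunction $U(t)\equiv X$, and target set $C=\int_T\omega\,d\mu+W$. Since $W$ is weakly$^*\!$ closed and translation by the fixed vector $\int_T\omega\,d\mu\in E^*$ is a weak$^*\!$ homeomorphism of $E^*$, the set $C$ is weakly$^*\!$ closed. The cost $-\varphi$ is an integrably bounded Carath\'eodory function, hence a normal integrand bounded below by the constant $-1\in L^1(\mu)$; $\Phi$ is integrably bounded (as $X$ is bounded in $E^*$) and $\Phi(t,\cdot)=\imath_X$ is weakly$^*\!$ continuous; $U$ is closed-valued with $\mathrm{gph}\,U=T\times X\in\Sigma\otimes\mathrm{Borel}(X)$; and condition (iv) of Theorem \ref{exst1} holds because $\delta_{\omega(t)}\in\Delta(X)$ gives $\int_T\int_X\imath_X(x)\,d(\delta_{\omega(t)})\,d\mu=\int_T\omega\,d\mu\in C$. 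By Theorem \ref{exst1}, there exists a solution $\lambda^*\in\mathcal{R}(T,X)$ to this instance of \eqref{rvp}; in particular $\lambda^*(t)(U(t))=\lambda^*(t)(X)=1$ for a.e.\ $t$ (automatic here) and $\int_T\int_X\imath_X(x)\lambda^*(t,dx)d\mu-\int_T\omega\,d\mu\in W$, so $\lambda^*\in\A(\E_\mathcal{R})$.

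We claim $\lambda^*\in\P(\E_\mathcal{R})$. Suppose not. Then there exist $\nu\in\mathcal{R}(T,X)$ that is a relaxed allocation for $\E_\mathcal{R}$ (feasibility being part of being a competing allocation) and $A\in\Sigma$ with $\mu(A)>0$ such that $\nu(t)\,{\succsim}_\mathcal{R}(t)\,\lambda^*(t)$ a.e.\ $t\in T$ and $\nu(t)\,{\succ}_\mathcal{R}(t)\,\lambda^*(t)$ for every $t\in A$. By the representation above, $\int_X\varphi(t,x)\nu(t,dx)\ge\int_X\varphi(t,x)\lambda^*(t,dx)$ for a.e.\ $t$, with strict inequality on $A$. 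Integrating over $T$ gives
$$
\int_T\int_X\varphi(t,x)\nu(t,dx)d\mu>\int_T\int_X\varphi(t,x)\lambda^*(t,dx)d\mu,
$$
that is, $-J_\varphi(\nu)<-J_\varphi(\lambda^*)$. But $\nu\in\A(\E_\mathcal{R})$ means $\nu$ is feasible for \eqref{rvp} in the instance above, so this contradicts the optimality of $\lambda^*$. Hence no such $\nu$ exists and $\lambda^*\in\P(\E_\mathcal{R})$, proving that $\P(\E_\mathcal{R})$ is nonempty.
\end{proof}
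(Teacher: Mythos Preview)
Your proof is correct and mirrors the paper's argument exactly: normalize $\varphi$ to be integrably bounded, invoke Theorem \ref{exst1} with $\Phi=\imath_X$, $U\equiv X$, $C=\int_T\omega\,d\mu+W$, and then show by contradiction that any maximizer of $J_\varphi$ over $\A(\E_\mathcal{R})$ is Pareto optimal for $\E_\mathcal{R}$. One small slip: your check of condition (iv) uses the $t$-dependent $\delta_{\omega(t)}$ rather than a constant $P\in\Pi(X)$, so what you have actually established is nonemptiness of the feasible set for \eqref{rvp} (which is all the proof of Theorem \ref{exst1} needs), and this---as in the paper---tacitly relies on $0\in W$.
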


\begin{proof}
If the Carath\'{e}odory integrand $\varphi$ in the preference representation \eqref{rp1} happens to be integrably unbounded, then choose any Carath\'{e}odory function $F:T\times \R \to \R$ such that $F(t,\cdot)$ is strictly increasing for every $t\in T$ and there exists $\psi\in L^1(\mu)$ satisfying $|F(t,r)|\le \psi(t)$ for every $(t,r)\in T\times \R$, and consider the transformation $(t,x)\mapsto F(t,\varphi(t,x))$ of the preference representation, which is obviously an integrably bounded Carath\'{e}odory integrand preserving \eqref{rp1}. (For example, letting $\tilde{\varphi}(t):=\max_{x\in X}|\varphi(t,x)|$ and $F(t,r):=e^{-\tilde{\varphi}(t)}r$ yields $|F(t,\varphi(t,x))|\le 1$ for every $(t,x)\in T\times X$.) Thus, without loss of generality we may assume that $\varphi$ is integrably bounded. Consider \eqref{rvp} with $\Phi(t,x)\equiv \imath_X(x)$, $U(t)\equiv X$, and $C=\int\omega d\mu+W$, which is reduced to the variational problem with a Gelfand integral constraint 
\begin{equation}
\label{rvp2}
\begin{aligned}
& \max_{\lambda\in \mathcal{R}(T,X)}\int_T\int_X\varphi(t,x)\lambda(t,dx)d\mu \\
& \text{s.t. }\int_T\int_X\imath_X(x)\lambda(t,dx)d\mu-\int_T\omega(t)d\mu\in W.
\end{aligned}
\tag{RVP$'$}
\end{equation}
Suppose that the solution $\lambda$ to \eqref{rvp2} does not belong to $\P(\E_\mathcal{R})$. Then there exist $\nu\in \A(\E_\mathcal{R})$ and $A\in \Sigma$ of positive measure such that $\nu(t)\,{\succsim}_\mathcal{R}(t)\,\lambda(t)$ a.e.\ $t\in T$ and $\nu(t)\,{\succ}_\mathcal{R}(t)\,\lambda(t)$ for every $t\in A$. Given the preference formula \eqref{rp2}, this is equivalent to $\int\varphi(t,x)\nu(t,dx)\ge \int\varphi(t,x)\lambda(t,dx)$ a.e.\ $t\in T$ and $\int\varphi(t,x)\nu(t,dx)>\int\varphi(t,x)\lambda(t,dx)$ for every $t\in A$. Integrating these inequalities over $T$ and adding them up yield $\iint\varphi(t,x)\nu(t,dx)d\mu>\iint\varphi(t,x)\lambda(t,dx)d\mu$, a contradiction to the fact that $\lambda$ is a solution to \eqref{rvp2}.
\end{proof}

It should be noted that to guarantee that $\P(\E_\mathcal{R})$ is nonempty, the saturation and even the nonatomicity hypotheses are unnecessary as well as the convexity hypothesis. Under the saturation hypothesis, the existence of Pareto optimal allocations for the original economy is guaranteed. 

\begin{thm}
\label{exst4}
Let $(T,\Sigma,\mu)$ be a saturated finite measure space, $E$ be a separable Banach space, and $W$ be a weakly$^*\!$ closed subset of $E^*$. Then $\P(\E)$ is nonempty for every economy $\E$ satisfying Assumption \ref{assmp}. 
\end{thm}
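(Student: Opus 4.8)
The plan is to obtain a Pareto optimal allocation for $\E$ by purifying a utilitarian-optimal relaxed allocation: Theorem \ref{exst3} produces the latter with no atomlessness assumption, and the purification principle (Theorem \ref{PP1}), which is precisely where saturation enters, transports it down to the original economy while preserving aggregate utility and the market-clearing value. First I would set the stage as in the proof of Theorem \ref{exst3}: after replacing the Carath\'eodory representation $\varphi$ of $t\mapsto{\succsim}(t)$ from \eqref{rp1} by $(t,x)\mapsto F(t,\varphi(t,x))$ for a suitable measurable-in-$t$, strictly increasing, integrably bounded $F$, I may assume $\varphi\in\C^1(T\times X,\mu)$, and this changes neither ${\succsim}$ nor ${\succsim}_\mathcal{R}$. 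Since $E$ is separable and $X\subset E^*$ is weakly$^*\!$ compact, $X$ is a compact Polish space and the inclusion $\imath_X:X\to E^*$ is norm bounded (weak$^*\!$ compactness forces norm boundedness) and weak$^*\!$ continuous. Then I would apply Theorem \ref{exst3} (and its proof) to the relaxed problem \eqref{rvp2}, i.e.\ \eqref{rvp} with objective $\varphi$ and data $\Phi\equiv\imath_X$, $U\equiv X$, $C=\int_T\omega\,d\mu+W$: it has a solution $\lambda^*\in\mathcal{R}(T,X)$, which is a relaxed allocation for $\E_\mathcal{R}$ lying in $\P(\E_\mathcal{R})$; set $v^*:=\int_T\int_X\varphi(t,x)\lambda^*(t,dx)\,d\mu$, the maximal value of \eqref{rvp2}.

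Next I would purify $\lambda^*$. Consider the stacked function $\Psi:=(\imath_X,\varphi):T\times X\to E^*\times\R$, and regard $E^*\times\R$ as the dual of the separable Banach space $E\times\R$ endowed with its weak$^*\!$ topology. Then $\Psi$ is integrably bounded and measurable, $\Psi(t,\cdot):X\to E^*\times\R$ is weak$^*\!$ continuous for each $t$, the constant multifunction $U\equiv X$ is compact-valued with $\mathrm{gph}\,U=T\times X\in\Sigma\otimes\mathrm{Borel}(X)$, and $\lambda^*$ is trivially concentrated on $U$. Applying Theorem \ref{PP1} --- this is the one place the saturation of $(T,\Sigma,\mu)$ is used --- yields $f\in\M(T,X)$ with $f(t)\in X$ a.e.\ $t\in T$ and $\int_T\Psi(t,f(t))\,d\mu=\int_T\int_X\Psi(t,x)\lambda^*(t,dx)\,d\mu$. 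Reading off the two coordinates, $\int_T f\,d\mu=\int_T\int_X\imath_X(x)\lambda^*(t,dx)\,d\mu$ and $\int_T\varphi(t,f(t))\,d\mu=v^*$; the first identity, together with $\lambda^*$ being a relaxed allocation, gives $\int_T f\,d\mu-\int_T\omega\,d\mu\in W$, so $f\in\A(\E)$.

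Finally I would verify $f\in\P(\E)$. If not, there are $g\in\A(\E)$ and $A\in\Sigma$ with $\mu(A)>0$ such that $g(t)\,{\succsim}(t)\,f(t)$ a.e.\ and $g(t)\,{\succ}(t)\,f(t)$ for $t\in A$; by \eqref{rp1}, $\varphi(t,g(t))\ge\varphi(t,f(t))$ a.e.\ and $\varphi(t,g(t))>\varphi(t,f(t))$ on $A$, so $\int_T\varphi(t,g(t))\,d\mu>\int_T\varphi(t,f(t))\,d\mu=v^*$. But $\A(\E)\subset\A(\E_\mathcal{R})$, so $\delta_{g(\cdot)}\in\mathcal{R}(T,X)$ is feasible for \eqref{rvp2}, whence $\int_T\varphi(t,g(t))\,d\mu=\int_T\int_X\varphi(t,x)\delta_{g(t)}(dx)\,d\mu\le v^*$, a contradiction. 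Hence $f\in\P(\E)$ and $\P(\E)$ is nonempty. I expect the only nonroutine points to be the integrably-bounded Carath\'eodory reduction (already handled in Theorem \ref{exst3}) and the verification that the stacked function $\Psi$ satisfies the hypotheses of the purification principle with $E^*\times\R$ in the role of the dual space; the entire conceptual weight of saturation is absorbed into that single invocation of Theorem \ref{PP1}, consistently with the fact that Theorems \ref{exst1} and \ref{exst3} required no atomlessness whatsoever.
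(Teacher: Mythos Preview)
Your proposal is correct and follows essentially the same route as the paper's proof: reduce to an integrably bounded Carath\'eodory representation as in Theorem \ref{exst3}, take a maximizer $\lambda^*$ of \eqref{rvp2}, purify via Theorem \ref{PP1} applied to the stacked function $\Psi=(\imath_X,\varphi):T\times X\to E^*\times\R$ to obtain $f\in\A(\E)$ with the same aggregate utility, and then derive a contradiction from any Pareto improvement. The only cosmetic difference is that the paper inserts the intermediate observation that $f$ solves \eqref{vp2} before concluding $f\in\P(\E)$, whereas you contradict the optimality of $\lambda^*$ in \eqref{rvp2} directly; your streamlining is harmless since $\A(\E)\subset\A(\E_\mathcal{R})$ makes the two contradictions equivalent.
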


\begin{proof}
As shown in the proof of Theorem \ref{exst3}, any solution $\lambda\in \mathcal{R}(T,X)$ to \eqref{vp2} belongs to $\P(\E_\mathcal{R})$. It follows from Proposition \ref{PP1} that there exists $f\in \A(\E)$ such that $\iint \varphi(t,x)\lambda(t,dx)=\int\varphi(t,f(t))d\mu$. If $f$ is not a solution to the variational problem
\begin{equation}
\label{vp2}
\begin{aligned}
& \max_{f\in \mathcal{M}(T,X)}\int_T\varphi(t,f(t))d\mu \\
& \text{s.t. }\int_Tf(t)d\mu-\int_T\omega(t)d\mu\in W
\end{aligned}
\tag{VP$'$}
\end{equation}
then there exists $g\in \A(\E)$ such that $\int\varphi(t,g(t))d\mu>\int\varphi(t,f(t))d\mu$. Since $\delta_{g(\cdot)}\in \A(\E_\mathcal{R})$, the above inequality obviously contradicts the fact that $\lambda\in \mathcal{R}(T,X)$ is a solution to \eqref{vp2}. Suppose that the solution $f$ to \eqref{vp2} does not belong to $\P(\E)$. Then there exist $g\in \A(\E)$ and $A\in \Sigma$ of positive measure such that $g(t)\,{\succsim}(t)\,f(t)$ a.e.\ $t\in T$ and $g(t)\,{\succ}(t)\,f(t)$ for every $t\in A$. Given the preference formula \eqref{rp1}, this is equivalent to $\varphi(t,g(t))\ge \varphi(t,f(t))$ a.e.\ $t\in T$ and $\varphi(t,g(t))>\varphi(t,f(t))$ for every $t\in A$. Integrating these inequalities over $T$ and adding them up yield $\int\varphi(t,g(t))d\mu>\int\varphi(t,f(t))d\mu$, a contradiction to the fact that $f$ is a solution to \eqref{vp2}.
\end{proof}

The existence of (relaxed) Walrasian equilibria with free disposal is investigated in \cite{ks16b} for the commodity space with the dual space of $L^\infty=(L^1)^*$. The crucial argument for the proof is the nonemptiness of the norm interior of the positive cone of $L^\infty$, which fails to hold for general dual spaces. It is a challenging open question to establish the existence of (relaxed) Walrasian equilibria for general dual spaces. 

\begin{rem}
The role of the weak$^*\!$ compactness of the consumption set $X$ in Assumption \ref{assmp} is twofold. The first role is to guarantee the existence of continuous utility functions. To apply the celebrated Debreu's utility representation theorem, $X$ is required to satisfy the second axiom of countability; in particular, it needs to be a separable metric space. Without the weak$^*\!$ compactness assumption, $X$ may not be a separable metric space with respect to the weak$^*\!$ topology even if $E$ is separable, which prevents one from obtaining continuous utility functions representing continuous preference relations. The second role is to guarantee the existence of solutions to \eqref{rvp} in Theorem \ref{exst1}. The lack of compactness of $X$ inevitably leads to the noncompactness of $\mathcal{R}(T,X)$ and $\Pi(X)$, and hence, to a possible failure of Theorems \ref{exst3} and \ref{exst4}.  
\end{rem}


\begin{thebibliography}{00}
\bibitem[Aliprantis and Border(2006)]{ab06}
Aliprantis, C.\,D. and K.\,C. Border, \textit{Infinite Dimensional Analysis: A Hitchhiker's Guide}, 3rd edn., Springer, Berlin, 2006. 

\bibitem[Arkin and Levin(1972)]{al72}
Arkin, V.\,I. and V.\,L. Levin, ``Convexity of vector integrals, theorems on measurable choice and variational problems'', \textit{Russian Math.\ Surveys} \textbf{2} (1972), 1--85.

\bibitem[Artstein(1998)]{ar98}
Artstein, Z., ``Relaxed multifunctions and Young multimeasures'', \textit{Set-\hspace{0pt}Valued Anal.}\ \textbf{6} (1998), 237--255.

\bibitem[Aumann(1964)]{au64}
Aumann, R.\,J., ``Markets with a continuum of traders'', \textit{Econometrica} \textbf{32} (1964), 39--50.

\bibitem[Aumann(1966)]{au66}
Aumann, R.\,J., ``Existence of competitive equilibria in markets with a continuum of traders'', \textit{Econometrica} \textbf{34} (1966), 1--17.

\bibitem[Aumann(1969)]{au69}
Aumann, R.\,J., ``Measurable utility and the measurable choice theorem'', \textit{La d\'ecision}, Actes Coll.\ Int.\ du CNRS, Aix-\hspace{0pt}en-\hspace{0pt}Provence, Paris, 1969, pp.\,15--26.

\bibitem[Aumann and Perles(1965)]{ap65}
Aumann R.\,J. and M. Perles, ``A variational problem arising in economics",  \textit{J.\ Math.\ Anal.\ Appl.}\ \textbf{11} (1965), 488--503.

\bibitem[Balder(1984)]{ba84a}
Balder, E.\,J., ``A general approach to lower semicontinuity and lower closure in optimal control", \textit{SIAM J.\ Control Optim.}\ \textbf{22} (1984), 570--598.

\bibitem[Balder(2000)]{ba00}
Balder, E.\,J., ``Lectures on Young measure theory and its applications in economics", \textit{Rend.\ Isit.\ Mat.\ Univ.\ Triste} \textbf{31} (2000), 1--69.

\bibitem[Balder(2008)]{ba08}
Balder, E.\,J., ``More on equilibria in competitive markets with externalities and a continuum of agents'', \textit{J.\ Math.\ Econom.}\ \textbf{44} (2008), 575--602.

\bibitem[Bertsekas and Shreve(1978)]{bs78}
Bertsekas, D.\,P. and S.\,E. Shreve, \textit{Stochastic Optimal Control: The Discrete-Time Case}, Academic Press, New York, 1978. 

\bibitem[Berliocchi and Lasry(1973)]{bl73}
Berliocchi, H. and J.-M. Lasry, ``Int\'{e}grandes normales et measures param\'{e}tr\'{e}es en calcul des variations", \textit{Bulletin de la S.M.F.}\ \textbf{101} (1973), 129--184.

\bibitem[Cascales et al.(2011)]{ckr11}
Cascales, B., V. Kadets and J. Rodr\'iguez, ``The Gelfand integral for multi-\hspace{0pt}valued functions", \textit{J.\ Convex Anal.}\ \textbf{18} (2011), 873--895.

\bibitem[Castaing and Valadier(1977)]{cv77}
Castaing, C. and M. Valadier, \textit{Convex Analysis and Measurable Multifunctions}, Lecture Notes in Math.\ \textbf{580}, Springer, Berlin, 1977. 

\bibitem[Diestel and Uhl(1977)]{du77}
Diestel, J. and J.\,J. Uhl, Jr., \textit{Vector Measures}, Amer.\ Math.\ Soc., Providence, 1977. 

\bibitem[Fajardo and Keisler(2002)]{fk02}
Fajardo, S. and H.\,J. Keisler, \textit{Model Theory of Stochastic Processes}, A K Peters, Ltd., Natick, 2002. 

\bibitem[Fremlin(2012)]{fr12}
Fremlin, D.\,H., \textit{Measure Theory, Vol.\,3: Measure Algebras, Part I}, 2nd edn., Torres Fremlin, Colchester, 2012. 

\bibitem[Greinecker and Podczeck(2013)]{gp13}
Greinecker, M. and K. Podczeck, ``Liapounoff's vector measure theorem in Banach spaces and applications to general equilibrium theory'', \textit{Econom.\ Theory Bull}.\ \textbf{1} (2013), 157--173.

\bibitem[Hildenbrand(1974)]{hi74}
Hildenbrand, W., \textit{Core and Equilibria of a Large Economy}, Princeton Univ.\ Press, Princeton, 1974.  

\bibitem[Hoover and Keisler(1984)]{hk84}
Hoover, D. and H.\,J. Keisler, ``Adapted probability distributions'', \textit{Trans.\ Amer.\ Math.\ Soc.}\ {\bf 286}  (1984), 159--201.

\bibitem[Kakutani(1944)]{ka44}
Kakutani, S., ``Construction of a non-separable extension of the Lebesgue measure space'', \textit{Proc.\ Imp.\ Acad.}\ \textbf{20} (1944), 115--119. 

\bibitem[Keisler and Sun(2009)]{ks09}
Keisler, H.\,J. and Y.\,N. Sun, ``Why saturated probability spaces are necessary'', \textit{Adv.\ Math.}\ \textbf{221} (2009), 1584--1607.

\bibitem[Khan and Sagara(2013)]{ks13}
Khan, M.\,A. and N. Sagara, ``Maharam-\hspace{0pt}types and Lyapunov's theorem for vector measures on Banach spaces'', \textit{Illinois J.\ Math.}\ \textbf{57} (2013), 145--169. 

\bibitem[Khan and Sagara(2014)]{ks14a}
Khan, M.\,A. and N. Sagara, ``Weak sequential convergence in $L^1(\mu,X)$ and an exact version of Fatou's lemma'', \textit{J.\ Math.\ Anal.\ Appl.}\ \textbf{412} (2014), 554--563.

\bibitem[Khan and Sagara(2014)]{ks14b}
Khan, M.\,A. and N. Sagara, ``The bang-\hspace{0pt}bang, purification and convexity principles in infinite dimensions: Additional characterizations of the saturation property", \textit{Set-\hspace{0pt}Valued Var.\ Anal.}\ \textbf{22} (2014), 721--746.

\bibitem[Khan and Sagara(2015)]{ks15}
Khan, M.\,A. and N. Sagara, ``Maharam-\hspace{0pt}types and Lyapunov's theorem for vector measures on locally convex spaces with control measures'', \textit{J.\ Convex Anal.}\ \textbf{22} (2015), 647--672.

\bibitem[Khan and Sagara(2016)]{ks16a}
Khan, M.\,A. and N. Sagara, ``Maharam-types and Lyapunov's theorem for vector measures on locally convex spaces without control measures'', \textit{Pure Appl.\ Func.\ Anal.} \textbf{1} (2016), 47--62. 

\bibitem[Khan and Sagara(2016)]{ks16b}
Khan, M.\,A. and N. Sagara, ``Relaxed large economies with infinite-dimensional commodity spaces: The existence of Walrasian equilibria'', \textit{J.\  Math.\ Econom.} \textbf{67} (2016), 95--107. 

\bibitem[Khan et al.(2016)]{kss16}
Khan, M.\,A., N. Sagara and T. Suzuki, ``An exact Fatou lemma for Gelfand integrals: A characterization of the Fatou property'', \textit{Positivity} \textbf{20} (2016), 343--354.

\bibitem[Loeb and Sun(2006)]{ls06}
Loeb, P.\,A. and Y.\,N. Sun, ``Purification of measure-\hspace{0pt}valued maps", \textit{Illinois J.\ Math.}\ \textbf{50} (2006), 747--762.

\bibitem[Loeb and Sun(2009)]{ls09}
Loeb, P.\,A. and Y.\,N. Sun, ``Purification and saturation", \textit{Proc.\ Amer.\ Math.\ Soc.}\ \textbf{137} (2009),  2719--2724.

\bibitem[Maharam(1942)]{ma42}
Maharam, D., ``On homogeneous measure algebras'', \textit{Proc.\ Natl.\ Acad.\ Sci.\ USA} \textbf{28} (1942), 108--111.

\bibitem[McShane(1967)]{mc67}
McShane, E.\,J., ``Relaxed controls and variational problems'', \textit{SIAM J.\ Control} \textbf{5} (1967), 438--485.

\bibitem[Megginson(1998)]{me98}
Megginson, R.\,E., \textit{An Introduction to Banach Space Theory}, Springer, Berlin, 1998. 

\bibitem[Mordukhovich(2006)]{mo06}
Mordukhovich, B.\,S., \textit{Variational Analysis and Generalized Differentiation I: Basic Theory; II: Applications}, Springer, Berlin, 2006.  

\bibitem[Mordukhovich and Sagara(2018)]{ms18}
Mordukhovich, B.\,S. and N. Sagara, ``Subdifferentials of nonconvex integral functionals in Banach spaces with applications to stochastic dynamic programming'', \textit{J.\ Convex Anal.}\ \textbf{25} (2018), in press, \url{http://arxiv.org/abs/1508.02239}.

\bibitem[Podczeck(2008)]{po08}
Podczeck, K., ``On the convexity and compactness of the integral of a Banach space valued correspondence", \textit{J. Math. Econom.}\ \textbf{44} (2008), 836--852.

\bibitem[Podczeck(2009)]{po09}
Podczeck, K., ``On purification of measure-\hspace{0pt}valued maps", \textit{Econom.\ Theory} \textbf{38} (2009), 399--418.

\bibitem[Sagara(2015)]{sa15}
Sagara, N., ``An indirect method of nonconvex variational problems in Asplund spaces: The case for saturated measure spaces'', \textit{SIAM J.\ Control Optim.}\ \textbf{53} (2015), 336--351. 

\bibitem[Sainte-Beuve(1978)]{sb78}
Sainte-Beuve, M.-F., ``Some topological properties of vector measures with bounded variation and its application", \textit{Ann.\ Mat.\ Pura Appl.\ (4)} \textbf{116} (1978), 317--379.

\bibitem[Schwartz(1973)]{sc73}
Schwartz, L., \textit{Radon Measures on Arbitrary Topological Spaces and Cylindrical Measures}, Oxford Univ. Press, London, 1973. 

\bibitem[Sun(1992)]{su92}
Sun, Y.\,N., ``On the theory of vector valued Loeb measures and integration", \textit{J.\ Funct.\ Anal.}\ \textbf{104} (1992), 327--362.

\bibitem[Sun(1997)]{su97}
Sun, Y.\,N., ``Integration of correspondences on Loeb spaces", \textit{Trans.\ Amer.\ Math.\ Soc.}\ \textbf{349} (1997), 129--153.

\bibitem[Sun and Yannelis(2008)]{sy08}
Sun, Y.\,N. and N.\,C. Yannelis, ``Saturation and the integration of Banach valued correspondences", \textit{J.\ Math.\ Econom.}\ \textbf{44} (2008), 861--865. 

\bibitem[Thomas(1975)]{th75}
Thomas, G.\,E.\,F., ``Integration of functions with values in locally convex Suslin spaces'', \textit{Trans.\ Amer.\ Math.\ Soc.}\ \textbf{212} (1975), 61--81.

\bibitem[Warga(1972)]{wa72}
Warga, J., \textit{Optimal Control of Differential and Functional Equations}, Academic Press, New York, 1972. 

\bibitem[Young(1969)]{yo69}
Young, L.\,C., \textit{Lectures on the Calculus of Variations and Optimal Control Theory}, Amer.\ Math.\  Soc., Providence, 1969.  
\end{thebibliography}
\end{document}